\DeclarePairedDelimiter\ceil{\lceil}{\rceil}
\DeclarePairedDelimiter\floor{\lfloor}{\rfloor}
\newcommand\independent{\protect\mathpalette{\protect\independenT}{\perp}}
\def\independenT#1#2{\mathrel{\rlap{$#1#2$}\mkern2mu{#1#2}}}
\definecolor{Red}{rgb}{1,0,0}
\definecolor{Blue}{rgb}{0,0,1}
\definecolor{Olive}{rgb}{0.41,0.55,0.13}
\definecolor{Yarok}{rgb}{0,0.5,0}
\definecolor{Green}{rgb}{0,1,0}
\definecolor{MGreen}{rgb}{0,0.8,0}
\definecolor{DGreen}{rgb}{0,0.55,0}
\definecolor{Yellow}{rgb}{1,1,0}
\definecolor{Cyan}{rgb}{0,1,1}
\definecolor{Magenta}{rgb}{1,0,1}
\definecolor{Orange}{rgb}{1,.5,0}
\definecolor{Violet}{rgb}{.5,0,.5}
\definecolor{Purple}{rgb}{.75,0,.25}
\definecolor{Brown}{rgb}{.75,.5,.25}
\definecolor{Grey}{rgb}{.5,.5,.5}
\newcommand{\inner}[2]{\langle{#1},{#2}\rangle}
\newcommand{\ignore}[1]{\relax}
\newtheorem{assumptions}{Assumptions}
\newtheorem{theorem}{Theorem}[section]
\newtheorem{remark}[theorem]{Remark}
\newtheorem{lemma}[theorem]{Lemma}
\newtheorem{proposition}[theorem]{Proposition}
\newtheorem{claim}[theorem]{Claim}
\newtheorem{definition}[theorem]{Definition}
\newenvironment{subtheorem}[1]{%
  \def\subtheoremcounter{#1}%
  \refstepcounter{#1}%
  \protected@edef\theparentnumber{\csname the#1\endcsname}%
  \setcounter{parentnumber}{\value{#1}}%
  \setcounter{#1}{0}%
  \expandafter\def\csname the#1\endcsname{\theparentnumber.\Alph{#1}}%
  \ignorespaces
}{%
  \setcounter{\subtheoremcounter}{\value{parentnumber}}%
  \ignorespacesafterend
}
\newcounter{parentnumber}
\def\BState{\State\hskip-\ALG@thistlm}
\definecolor{Red}{rgb}{1,0,0}
\definecolor{Blue}{rgb}{0,0,1}
\definecolor{Olive}{rgb}{0.41,0.55,0.13}
\definecolor{Green}{rgb}{0,1,0}
\definecolor{MGreen}{rgb}{0,0.8,0}
\definecolor{DGreen}{rgb}{0,0.55,0}
\definecolor{Yellow}{rgb}{1,1,0}
\definecolor{Cyan}{rgb}{0,1,1}
\definecolor{Magenta}{rgb}{1,0,1}
\definecolor{Orange}{rgb}{1,.5,0}
\definecolor{Violet}{rgb}{.5,0,.5}
\definecolor{Purple}{rgb}{.75,0,.25}
\definecolor{Brown}{rgb}{.75,.5,.25}
\definecolor{Grey}{rgb}{.5,.5,.5}
\definecolor{Pink}{rgb}{1,0,1}
\definecolor{DBrown}{rgb}{.5,.34,.16}
\definecolor{Black}{rgb}{0,0,0}
\author{
{\sf David Gamarnik}\thanks{MIT; e-mail: {\tt gamarnik@mit.edu}. Research supported  by the NSF grants CMMI-1335155.}
\and
{\sf Ilias Zadik}\thanks{MIT; e-mail: {\tt izadik@mit.edu}}
}
\begin{document}

\title{High Dimensional Linear Regression\\ using Lattice Basis Reduction}
\date{}

\maketitle

\begin{abstract}
We consider a high dimensional linear regression problem where the goal is to efficiently recover an unknown vector $\beta^*$ from $n$ noisy linear observations $Y=X\beta^*+W \in \mathbb{R}^n$, for known $X \in \mathbb{R}^{n \times p}$ and unknown $W \in \mathbb{R}^n$. Unlike most of the literature on this model we make no sparsity assumption on $\beta^*$. Instead we adopt a regularization based on assuming that the underlying vectors $\beta^*$ have rational entries with the same denominator $Q \in \mathbb{Z}_{>0}$. We call this $Q$-rationality assumption.  We propose a new polynomial-time algorithm for this task which is based on the seminal Lenstra-Lenstra-Lovasz (LLL) lattice basis reduction algorithm.  We establish that under the $Q$-rationality assumption, our algorithm recovers exactly the vector $\beta^*$ for a large class of distributions for the iid entries of $X$ and non-zero noise $W$. We prove that it is successful under small noise, even when the learner has access to only one observation ($n=1$). Furthermore, we prove that in the case of the Gaussian white noise for $W$, $n=o\left(p/\log p\right)$ and $Q$ sufficiently large, our algorithm tolerates a nearly optimal information-theoretic level of the noise.
\end{abstract}

\section{Introduction}
 We consider the following high-dimensional linear regression model. Consider $n$ samples of a vector $\beta^* \in \mathbb{R}^p$ in a vector form $Y=X\beta^*+W$ for some $X \in \mathbb{R}^{n \times p}$ and $W \in \mathbb{R}^n$. Given the knowledge of $Y$ and $X$ the goal is to infer $\beta^*$ using an efficient algorithm and the minimum number $n$ of samples possible. Throughout the paper we call $p$ the number of features, $X$ the measurement matrix and $W$ the noise vector. 

 We focus on the high-dimensional case where $n$ may be much smaller than $p$ and $p$ grows to infinity, a setting that has been very popular in the literature during the last years \cite{SignDen}, \cite{donoho2006compressed}, \cite{candes}, \cite{foucart2013mathematical}, \cite{wainwright2009sharp}. In this case, and under no additional structural assumption, the inference task becomes impossible, even in the noiseless case $W=0$, as the underlying linear system becomes underdetermined. Most papers address this issue by imposing a \textit{sparsity assumption} on $\beta^*$, which refers to $\beta^*$ having only a limited number of non-zero entries compared to its dimension  \cite{donoho2006compressed}, \cite{candes}, \cite{foucart2013mathematical}. During the past decades, the sparsity assumption led to a fascinating line of research in statistics and compressed sensing, which established, among other results, that several polynomial-time algorithms, such as Basis Pursuit Denoising Scheme and LASSO, can efficiently recover a sparse $\beta^*$ with number of samples much smaller than the number of features  \cite{candes}, \cite{wainwright2009sharp}, \cite{foucart2013mathematical}. For example, it is established that if $\beta^*$ is constrained to have at most $k \leq p$ non-zero entries, $X$ has iid $N(0,1)$ entries, $W$ has iid $N(0,\sigma^2)$ entries and $n $ is of the order $k \log \left(\frac{p}{k}\right)$, then both of the mentioned algorithms can recover $\beta^*$, up to the level of the noise. Different structural assumptions than sparsity have also been considered in the literature. For example, a recent paper \cite{Bora} makes the assumption that $\beta^*$ lies near the range of an $L$-Lipschitz generative model $G : \mathbb{R}^k \rightarrow \mathbb{R}^p$ and it proposes an algorithm which succeeds with $n=O(k \log L)$ samples.

A downside of all of the above results is that they provide no guarantee in the case $n$ is much smaller than  $k \log \left(\frac{p}{k}\right)$. Consider for example the case where the components of a sparse $\beta^*$ are binary-valued, and $X,W$ follow the Gaussian assumptions described above.  Supposing that $\sigma$ is sufficiently small, it is a straightforward argument that even when $n=1$, $\beta^*$ is recoverable from $Y=\inner{X}{\beta^*}+W$ by a brute-force method with probability tending to one as $p$ goes to infinity (whp). On the other hand, for sparse and binary-valued $\beta^*$, the Basis Pursuit method in the noiseless case \cite{donoho2006counting} and the Basis Pursuit Denoising Scheme in the noisy case \cite{gamarnikzadik2} have been proven to fail to recover a binary $\beta^*$ with $n=o(k \log \left( \frac{p}{k} \right))$ samples. Furthermore, LASSO has been proven to fail to recover a vector with the same support of $\beta^*$, with $n=o(k \log p)$ samples \cite{wainwright2009sharp}. This failure to capture the complexity of the problem accurately enough for small sample sizes also lead to an algorithmic hardness conjecture for the regime $n=o(k \log \left(\frac{p}{k}\right))$ \cite{gamarnikzadik}, \cite{gamarnikzadik2}. While this conjecture still stands in the general case, as we show in this paper, in the special case where $\beta^*$ is rational-valued and the magnitude of the noise $W$ is sufficiently small, the statistical computational gap can be closed and $\beta^*$ can be recovered even when $n=1$.

The structural assumption we impose on $\beta^*$ is that its entries are rational numbers with denominator equal to some fixed positive integer value $Q \in \mathbb{Z}_{>0}$, something we refer to as the $Q$\textit{-rationality assumption}. Note that for any $Q$, this assumption is trivially satisfied by the binary-valued $\beta^*$ which was discussed above. The 1-rationality assumption corresponds to $\beta^*$ having integer entries, which is well-motivated in practise. For example, this assumption appears frequently in the study of global navigation satellite systems (GPS) and communications \cite{HassibiGPS}, \cite{HassibiInteger}, \cite{Brunel}, \cite{BornoThesis}. 
In the first reference the authors propose a mixed linear/integer
model of the form $Y=Ax+Bz+W$ where z is an integer valued vector corresponding to integer multiples of certain wavelength.
Several examples corresponding to regression models with integer valued regression coefficients and zero noise (though not always in the same model) 
are also discussed in the book \cite{foucart2013mathematical}. 
In particular one application is the so-called Single-Pixel camera. In this model a vector $\beta$ corresponds to color intensities of an image for different
pixels and thus takes discrete values. The model assumes no noise, which is one of the assumptions we adopt in our model, though the corresponding regression matrix has i.i.d. $+1/-1$ Bernoulli entries, as opposed to a continuous distribution we assume. Two other applications involving noiseless regression models found 
in the same reference are MRI imaging and Radar detection. 

A large body of literature on noiseless regression type models is a series of papers on phase retrieval. Here the coefficients
of the regression vector $\beta^*$ and the entries of the regression matrix $X$ are complex valued, but the observation vector $Y=X\beta^*$ is only
observed through absolute values. This model has many applications, including crystallography, see~\cite{candes2015phase}.
The aforementioned paper provides many references to phase retrieval model including the cases when the entries of $\beta^*$ have a finite support. We believe
that our method can also be extended so that to model the case where the entries of the regression vector have a finite support, 
even if irrationally valued, and the entries of $Y$ are only observed
through their magnitude. In other words, we expect that the method of the present paper applies to the phase retrieval problem at least in some of the cases and this is
one of the current directions we are exploring.

Noiseless regression model with integer valued regression coefficients were also important in the theoretical development of compressive
sensing methods. Specifically,   Donoho~\cite{donoho2006compressed} and 
Donoho and Tanner~\cite{donoho2005neighborliness},\cite{donoho2006counting},\cite{donoho2009observed}
consider a noiseless regression model of the form $AB$
where $A$ is a random (say Gaussian) matrix and $B$ is the unit cube $[0,1]^p$. One of the  goals of these papers was  to count number of extreme points of the
projected polytope $AB$ in order to explain the effectiveness of the linear programming based methods. The extreme points of this
polytope can only appear as projections of extreme points of $B$ which are all length-p binary vector, namely one deals with noiseless regression
model with binary coefficients -- an important special case of the model we consider in our paper.

In the Bayesian setting, where the ground truth $\beta^*$ is sampled according to a discrete distribution \cite{DonohoAMP} proposes a low-complexity algorithm which provably recovers $\beta^*$ with $n=o(p)$ samples. This algorithm uses the technique of approximate message passing (AMP) and is motivated by ideas from statistical physics \cite{ZdeborovaCompressed}. Even though the result from \cite{DonohoAMP} applies to the general discrete case for $\beta^*$, it requires the matrix $X$ to be spatially coupled, a property that in particular does not hold for $X$ with iid standard Gaussian entries. Furthermore the required sample size for the algorithm to work is only guaranteed to be sublinear in $p$, a sample size potentially much bigger than the information-theoretic limit for recovery under sufficiently small noise ($n=1$). In the present paper, where $\beta^*$ satisfies the $Q$-rationality assumption, we propose a polynomial-time algorithm which applies for a large class of continuous distributions for the iid entries of $X$, including the normal distribution, and provably works even when $n=1$.

The algorithm we propose is inspired by the algorithm introduced in \cite{lagarias1985solving} which solves, in polynomial time, a certain version of the so-called Subset-Sum problem. To be more specific, consider the following NP-hard algorithmic problem. Given $p \in \mathbb{Z}_{>0}$ and $y,x_1,x_2,\ldots,x_p \in \mathbb{Z}_{>0}$ the goal is to find a $\emptyset \not = S  \subset [p]$ with $y=\sum_{i \in S}x_i$ when at least one such set $S$ is assumed to exist. Over 30 years ago, this problem received a lot of attention in the field of cryptography, based on the belief that the problem would be hard to solve in many ``real" instances. This would imply that several already built public key cryptosystems, called knapsack public key cryptosystems, could be considered safe from attacks \cite{LempelCrypto}, \cite{MerkleTrapdoor}. This belief though was proven wrong by several papers in the early 80s, see for example  \cite{ShamirPoly}. Motivated by this line of research, Lagarias and Odlyzko in \cite{lagarias1985solving}, and a year later Frieze in \cite{FriezeSubset}, using a cleaner and shorter argument, proved the same surprising fact: if $x_1,x_2,\ldots,x_p $ follow an iid uniform distribution on $[2^{\frac{1}{2}(1+\epsilon)p^2}]:=\{1,2,3,\ldots,2^{\frac{1}{2}(1+\epsilon)p^2}\}$ for some $\epsilon>0$ then there exists a polynomial-in-$p$ time algorithm which solves the subset-sum problem whp as $p \rightarrow +\infty$. In other words, even though the problem is NP-hard in the worst-case, assuming a quadratic-in-$p$ number of bits for the coordinates of $x$, the algorithmic complexity of the typical such problem is polynomial in $p$. The successful efficient algorithm is based on an elegant application of a seminal algorithm in the computational study of lattices called the Lenstra-Lenstra-Lovasz (LLL) algorithm, introduced in \cite{lenstra1982factoring}. This algorithm receives as an input a basis $\{b_1,\ldots,b_m\} \subset \mathbb{Z}^m$ of a full-dimensional lattice $\mathcal{L}$ and returns in time polynomial in $m$ and $\max_{i=1,2,\ldots,m} \log \|b_i\|_{\infty}$ a non-zero vector $\hat{z}$  in the lattice, such that $\|\hat{z}\|_2 \leq 2^{\frac{m}{2}} \|z\|_2$, for all $z \in \mathcal{L}\setminus \{0\}$.
  
  Besides its significance in cryptography, the result of \cite{lagarias1985solving} and \cite{FriezeSubset} enjoys an interesting linear regression interpretation as well. One can show that under the iid uniform in $[2^{\frac{1}{2}(1+\epsilon)p^2}]$ assumption for $x_1,x_2,\ldots,x_p$, there exists exactly one set $S$ with $y=\sum_{i \in S}x_i$ whp as $p$ tends to infinity. Therefore if $\beta^*$ is the indicator vector of this unique set $S$, that is $\beta^*_i=1(i \in S)$ for $i=1,2,\ldots,p$, we have that $y=\sum_{i}x_i\beta^*_i=\inner{x}{\beta^*}$ where $x:=(x_1,x_2,\ldots,x_p)$. Furthermore using only the knowledge of $y,x$ as input to the Lagarias-Odlyzko algorithm we obtain a polynomial in $p$ time algorithm which recovers exactly $\beta^*$ whp as $p \rightarrow +\infty$. Written in this form, and given our earlier discussion on high-dimensional linear regression, this statement is equivalent to the statement that the noiseless high-dimensional linear regression problem with binary $\beta^*$ and $X$ generated with iid elements from $\mathrm{Unif}[2^{\frac{1}{2}(1+\epsilon)p^2}]$ is polynomial-time solvable even with one sample $(n=1)$, whp as $p$ grows to infinity. The main focus of this paper is to extend this result to $\beta^*$ satisfying the $Q$-rationality assumption, continuous distributions on the iid entries of $X$ and non-trivial noise levels.


\subsection*{Summary of the Results}

 We propose a polynomial time algorithm for high-dimensional linear regression problem and establish a general result for its performance. We show that if the entries of $X \in \mathbb{R}^{n \times p}$ are iid from an arbitrary continuous distribution with bounded density and finite expected value, $\beta^*$ satisfies the $Q$-rationality assumption, $\|\beta^*\|_{\infty} \leq R$ for some $R>0$, and $W$ is either an adversarial vector with infinity norm at most $\sigma$ or has iid mean-zero entries with variance at most $\sigma^2$, then under some explicitly stated assumption on the parameters $n,p,\sigma,R,Q$ our algorithm recovers exactly the vector $\beta^*$ in time which is polynomial in $n,p, \log (\frac{1}{\sigma}), \log R, \log Q$, whp as $p$ tends to infinity. As a corollary, we show that for any $Q$ and $R$ our algorithm can infer correctly $\beta^*$, when $\sigma$ is at most exponential in $-\left(p^2/2+(2+p)\log (QR)\right)$, even from one observation ($n=1$). We show that for general $n$ our algorithm can tolerate noise level $\sigma$ which is exponential in $-\left((2n+p)^2/2n+(2+p/n) \log (QR)\right)$. We complement our results with the information-theoretic limits of our problem. We show that in the case of Gaussian white noise $W$, a noise level which is exponential in $-\frac{p}{n}\log (QR)$, which is essentially the second part of our upper bound, cannot be tolerated. This allows us to conclude that in the regime $n=o\left(p/\log p\right)$ and $RQ=2^{\omega(p)}$ our algorithm tolerates the optimal information theoretic level of noise.
   
The algorithm we propose receives as input real-valued data $Y,X$ but importantly it truncates in the first step the data by keeping the first $N$ bits after zero of every entry. In particular, this allows the algorithm to perform only \textbf{finite-precision} artihmetic operations. Here $N$ is a parameter of our algorithm chosen by the algorithm designer. For our recovery results it is chosen to be polynomial in $p$ and $\log (\frac{1}{\sigma})$.

A crucial step towards our main result is the extension of the Lagarias-Odlyzko algorithm \cite{lagarias1985solving}, \cite{FriezeSubset} to not necessarily binary, integer vectors $\beta^* \in \mathbb{Z}^p$, for measurement matrix $X \in \mathbb{Z}^{ n \times p}$ with iid entries not necessarily from the uniform distribution, and finally, for non-zero noise vector $W$. As in \cite{lagarias1985solving} and \cite{FriezeSubset}, the algorithm we construct depends crucially on building an appropriate lattice and applying the LLL algorithm on it. There is though an important additional step in the algorithm presented in the present paper compared with the algorithm in \cite{lagarias1985solving} and \cite{FriezeSubset}. The latter algorithm is proven to recover a non-zero integer multiple  $\lambda \beta^*$ of the underlying binary vector $\beta^*$. Then since $\beta^*$ is known to be binary, the exact recovery becomes a matter of renormalizing out the factor $\lambda$ from every non-zero coordinate. On the other hand, even if we establish in our case the corresponding result and recover a non-zero integer multiple of $\beta^*$ whp, this last renormalizing step would be impossible as the ground truth vector is not assumed to be binary. We address this issue as follows. First we notice that the renormalization step remains valid if the greatest common divisor of the elements of $\beta^*$ is 1. Under this assumption from any non-zero integer multiple of $\beta^*$, $\lambda \beta^*$ we can obtain the vector itself by observing that the greatest common divisor of $\lambda \beta^*$ equals to $\lambda$, and computing $\lambda$ by using for instance the Euclid's algorithm. We then generalize our recovery guarantee to arbitrary $\beta^*$. We do this by first translating implicitly the vector $\beta^*$ with a random integer vector $Z$ via translating our observations $Y=X\beta^*+W$ by $XZ$ to obtain $Y+XZ=X(\beta^*+Z)+W$. We then prove that the elements of $\beta^*+Z$ have greatest common divisor equal to unity with probability tending to one. This last step is  based on an analytic number theory argument which slightly extends a beautiful result from probabilistic number theory (see for example, Theorem 332 in \cite{Hardy}) according to which $\lim_{m \rightarrow +\infty} \mathbb{P}_{P,Q \sim \mathrm{Unif}\{1,2,\ldots,m\}, P \independent  Q}\left[ \mathrm{gcd}\left(P,Q\right)=1\right]= \frac{6}{\pi^2},$ where  $P \independent  Q$ refers to $P,Q$ being independent random variables. This result is not of clear origin in the literature, but possibly it is attributed to Chebyshev, as mentioned in \cite{ErdosPrime}. A key implication of this result for us is the fact that the limit above is strictly positive.

\subsection*{Definitions and Notation}

Let $\mathbb{Z}^*$ denote $\mathbb{Z} \setminus\{0\}$. For $k \in \mathbb{Z}_{>0}$ we set $[k]:=\{1,2,\ldots,k\}$. For a vector $x \in \mathbb{R}^d$ we define $\mathrm{Diag}_{d \times d}\left(x\right) \in \mathbb{R}^{d \times d}$ to be the diagonal matrix with $\mathrm{Diag}_{d \times d}\left(x\right)_{ii}=x_i, \text{ for } i \in [d].$ For $1\leq p < \infty$ by $\mathcal{L}_p$ we refer to the standard $p$-norm notation for finite dimensionall real vectors. Given two vectors $x,y \in \mathbb{R}^d$ the Euclidean inner product notation is denoted by $\inner{x}{y}:=\sum_{i=1}^d x_iy_i$. By  $\log: \mathbb{R}_{>0} \rightarrow \mathbb{R}$ we refer the logarithm with base 2. The lattice $\mathcal{L} \subseteq \mathbb{Z}^k$ generated by a set of linearly independent $b_1,\ldots,b_k \in \mathbb{Z}^k$  is defined as $\{ \sum_{i=1}^k z_i b_i | z_1,z_2,\ldots,z_k \in \mathbb{Z}\}$. Throughout the paper we use the standard asymptotic notation, $o,O,\Theta,\Omega$ for comparing the growth of two real-valued sequences $a_n,b_n, n \in \mathbb{Z}_{>0}$.Finally, we say that a sequence of events $\{A_p\}_{p \in \mathbb{N}}$ holds with high probability (whp) as $p \rightarrow +\infty$ if $\lim_{p \rightarrow + \infty} \mathbb{P}\left(A_p\right)=1.$
\section{Main Results}

\subsection{Extended Lagarias-Odlyzko algorithm}

Let $n,p,R \in \mathbb{Z}_{>0}$. Given $X \in \mathbb{Z}^{n \times p},\beta^* \in \left(\mathbb{Z} \cap [-R,R]\right)^p$ and $ W \in \mathbb{Z}^n$, set $Y=X\beta^*+W$. From the knowledge of $Y,X$ the goal is to infer exactly $\beta^*$. For this task we propose the following algorithm which is an extension of the algorithm in \cite{lagarias1985solving} and \cite{FriezeSubset}. For realistic purposes the values of $R,\|W\|_{\infty}$ is not assumed to be known exactly. As a result, the following algorithm, besides $Y,X$, receives as an input a number $\hat{R} \in \mathbb{Z}_{>0}$ which is an estimated upper bound in absolute value for the entries of $\beta^*$ and a number $\hat{W} \in \mathbb{Z}_{>0}$ which is an estimated upper bound in absolute value for the entries of $W$.
\begin{algorithm}
\caption{Extended Lagarias-Odlyzko (ELO) Algorithm}
\label{algo:ELO}
\LinesNumbered
\KwIn{$(Y,X,\hat{R},\hat{W})$, $Y \in \mathbb{Z}^n, X \in \mathbb{Z}^{n \times p}$, $\hat{R}, \hat{W} \in \mathbb{Z}_{>0}$.}
\KwOut{$\hat{\beta^*}$ an estimate of $\beta^*$}
Generate a random vector $Z \in \{\hat{R}+1,\hat{R}+2,\ldots,2\hat{R}+\log p\}^p$ with iid entries uniform in $\{\hat{R}+1,\hat{R}+2,\ldots,2\hat{R}+\log p\}$\\
\nl Set $Y_1=Y+XZ$.\\
\nl For each $i=1,2,\ldots,n$, if $|(Y_1)_i|<3$ set $(Y_2)_i=3$ and otherwise set $(Y_2)_i=(Y_1)_i$.\\
\nl Set $m=2^{n+\ceil{\frac{p}{2}}+3}p\left(\hat{R}\ceil{\sqrt{p}}+\hat{W}\ceil{\sqrt{n}}\right)$. \\
\nl Output $\hat{z} \in \mathbb{R}^{2n+p}$ from running the LLL basis reduction algorithm on the lattice generated by the columns of the following $(2n+p)\times (2n+p)$ integer-valued matrix,\begin{equation}
 A_m:= \left[ {\begin{array}{ccc}
    m X  & -m\mathrm{Diag}_{n \times n}\left(Y_2\right) & m I_{n \times n} \\
     I_{p \times p} & 0_{p \times n} & 0_{p \times n}\\
    0_{n \times p} & 0_{n \times n} & I_{n \times n}
  \end{array} } \right]
  \end{equation}\\
\nl Compute $g=\mathrm{gcd}\left(\hat{z}_{n+1},\hat{z}_{n+2},\ldots,\hat{z}_{n+p}\right),$ using the Euclid's algorithm.\\
\nl If $g \not = 0$, output $\hat{\beta^*}=\frac{1}{g}(\hat{z}_{n+1},\hat{z}_{n+2},\ldots,\hat{z}_{n+p})^t-Z.$ Otherwise, output $\hat{\beta^*}=0_{p \times 1}$.
\end{algorithm}

We explain here informally the steps of the (ELO) algorithm and briefly sketch the motivation behind each one of them. In the first and second steps the algorithm translates $Y$ by $XZ$ where $Z$ is a random vector with iid elements chosen uniformly from $\{\hat{R}+1,\hat{R}+2,\ldots,2\hat{R}+\log p\}$. In that way $\beta^*$ is translated implicitly to $\beta=\beta^*+Z$ because $Y_1=Y+XZ=X(\beta^*+Z)+W$. As we will establish using a number theoretic argument, $\mathrm{gcd}\left(\beta\right)=1$ whp as $p \rightarrow +\infty$ with respect to the randomness of $Z$, even though this is not necessarily the case for the original $\beta^*$. This is an essential requirement for our technique to exactly recover $\beta^*$ and steps six and seven to be meaningful. In the third step the algorithm gets rid of the significantly small observations. The minor but necessary modification of the noise level affects the observations in a negligible way.

The fourth and fifth steps of the algorithm provide a basis for a specific lattice in $2n+p$ dimensions. The lattice is built with the knowledge of the input and $Y_2$, the modified $Y$. The algorithm in step five calls the LLL basis reduction algorithm to run for the columns of $A_m$ as initial basis for the lattice. The fact that $Y$ has been modified to be non-zero on every coordinate is essential here so that $A_m$ is full-rank and the LLL basis reduction algorithm, defined in \cite{lenstra1982factoring}, can be applied,. This application of the LLL basis reduction algorithm is similar to the one used in \cite{FriezeSubset} with one important modification. In order to deal here with multiple equations and non-zero noise, we use $2n+p$ dimensions instead of $1+p$ in \cite{FriezeSubset}. Following though a similar strategy as in \cite{FriezeSubset}, it can be established that the $n+1$ to $n+p$ coordinates of the output of the algorithm, $\hat{z} \in \mathbb{Z}^{2n+p}$, correspond to a vector which is a non-zero integer multiple of $\beta$, say $\lambda \beta$ for $\lambda \in \mathbb{Z}^*$, w.h.p. as $p \rightarrow +\infty$.

The proof of the above result is an important part in the analysis of the algorithm and it is heavily based on the fact that the matrix $A_m$, which generates the lattice, has its first $n$ rows multiplied by the ``large enough" and appropriately chosen integer $m$ which is defined in step four. It can be shown that this property of $A_m$ implies that any vector $z$ in the lattice with ``small enough" $\mathcal{L}_2$ norm necessarily satisfies $\left(z_{n+1},z_{n+2},\ldots,z_{n+p}\right)=\lambda \beta$ for some $\lambda \in \mathbb{Z}^*$ whp as $p \rightarrow +\infty$. In particular, using that $\hat{z}$ is guaranteed to satisfy $\|\hat{z}\|_2 \leq 2^{\frac{2n+p}{2}} \|z\|_2$ for all non-zero $z$ in the lattice, it can be derived that $\hat{z}$ has a ``small enough" $\mathcal{L}_2$ norm and therefore indeed satisfies the desired property whp as $p \rightarrow +\infty$.  Assuming now the validity of the $\mathrm{gcd}\left(\beta\right)=1$ property, step six finds in polynomial time this unknown integer $\lambda$ that corresponds to $\hat{z}$, because $\mathrm{gcd}\left(\hat{z}_{n+1},\hat{z}_{n+2},\ldots,\hat{z}_{n+p}\right)=\mathrm{gcd}\left(\lambda \beta\right)=\lambda$. Finally step seven scales out $\lambda$ from every coordinate and then subtracts the known random vector $Z$, to output exactly $\beta^*$.  

Of course the above is based on an informal reasoning. Formally we establish the following result.
\begin{theorem}\label{extention}
Suppose
\begin{itemize}
\item[(1)] $X \in \mathbb{Z}^{n \times p}$ is a matrix with iid entries generated according to a distribution $\mathcal{D}$ on $\mathbb{Z}$ which for some $N \in \mathbb{Z}_{>0}$ and constants $C,c>0$, assigns at most $\frac{c}{2^N}$ probability on each element of $\mathbb{Z}$ and satisfies $\mathbb{E}[|V|] \leq C2^N$, for $ V\overset{d}{=}\mathcal{D} $;
\item[(2)]  $\beta^* \in \left(\mathbb{Z} \cap [-R,R] \right)^p$, $W \in \mathbb{Z}^n$;
\item[(3)] $Y=X\beta^*+W$.
\end{itemize}  
Suppose furthermore that $  \hat{R} \geq R$ and 
\begin{equation}\label{eq:limit}
N \geq \frac{1}{2n}(2n+p)\left[2n+p+10\log\left( \hat{R} \sqrt{p}+\left(\|W\|_{\infty}+1\right) \sqrt{n}\right) \right]+6 \log \left(\left(1+c\right)np \right).
\end{equation} For any $\hat{W} \geq \|W\|_{\infty}$  the algorithm ELO with input $(Y,X,\hat{R},\hat{W})$  outputs \textbf{exactly} $\beta^*$ w.p. $1-O \left(\frac{1}{np} \right)$ (whp as $p \rightarrow +\infty$) and terminates in time at most polynomial in $n,p,N, \log \hat{R}$ and $\log \hat{W}$.
\end{theorem}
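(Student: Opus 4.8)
The plan is to establish the informal reasoning sketched above in three rigorous stages, following the Lagarias–Odlyzko/Frieze blueprint adapted to $2n+p$ dimensions. First I would carry out the \emph{gcd-reduction step}: implicitly translating $\beta^*$ to $\beta=\beta^*+Z$ via $Y_1=Y+XZ$, I must show $\gcd(\beta)=1$ with probability $1-o(1)$ over the randomness of $Z$. Since the entries of $Z$ are iid uniform on an interval of $\hat R+\log p$ consecutive integers, the entries $\beta_i=\beta^*_i+Z_i$ are independent and each is (close to) uniform on an interval of that length. I would then invoke the analytic number-theory fact alluded to in the introduction — that two (nearly) uniform integers from a long interval are coprime with probability bounded below by a positive constant, in fact the probability they share a common prime factor $q$ is $O(1/q^2)$ — and sum over primes $q$ to conclude that $\mathbb P[\gcd(\beta)\neq 1]\le \sum_q \mathbb P[q\mid \beta_i\ \forall i]\le \sum_q (1/q + O(1/\log p))^p$, which is $o(1/(np))$ once $p$ is large because the dominant term $q=2$ contributes at most $(1/2+o(1))^p$ and the tail is controlled by the $1/q^2$-type bound. (The passage from "uniform on $\{1,\dots,m\}$" to "uniform on a shifted interval" is the slight extension of Theorem 332 in \cite{Hardy} mentioned earlier.)

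Second, the \emph{lattice step}, which I expect to be the technical heart. I would show that the columns of $A_m$ are linearly independent (the blocks $I_{p\times p}$ and the lower $I_{n\times n}$, together with the $Y_2$ being nonzero on every coordinate after step 3, make $A_m$ full rank), so LLL applies and returns $\hat z$ with $\|\hat z\|_2\le 2^{(2n+p)/2}\|z\|_2$ for every nonzero $z\in\mathcal L$. I would exhibit an explicit short vector in the lattice: taking the integer combination with coefficients $(\beta, 1, -W)$ — i.e. the columns indexed by the $I_{p\times p}$ block scaled by $\beta$, the $Y_2$-column scaled by $1$, and the last $I_{n\times n}$ block scaled by $-W$ — one gets, in the first $n$ coordinates, $m(X\beta - Y_2\cdot 1 - W)=m(Y_1-Y_2+\text{corrections})$, which is $0$ outside the few coordinates where step 3 perturbed $Y$, and there it is bounded by $3m$; the remaining coordinates are exactly $\beta$ and $-W$. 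Hence there is a nonzero lattice vector of $\mathcal L_2$-norm at most $\sqrt{9m^2 n+\|\beta\|_2^2+\|W\|_2^2}=O(m\sqrt n)$ (using $m$ as defined and $\|\beta\|_\infty\le 2\hat R+\log p$). Therefore $\|\hat z\|_2\le 2^{(2n+p)/2}\cdot O(m\sqrt n)$. The key dichotomy is then: the first $n$ coordinates of $\hat z$ are multiples of $m$, and if $\hat z$ has $\mathcal L_2$-norm strictly below $m$ then those coordinates \emph{must} be zero, forcing $X\cdot(\hat z_{n+1},\dots,\hat z_{n+p})^t = \hat z_1'\cdot Y_2 - \hat z''$ structurally; combined with the near-orthogonality this yields $(\hat z_{n+1},\dots,\hat z_{n+p})=\lambda\beta$ for some integer $\lambda$ — unless a "bad" event occurs in which some other small-norm integer vector $u\ne$ (multiple of $\beta$) also satisfies $Xu$ being tiny relative to $m$. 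Ruling out this bad event is where the randomness of $X$ and the lower bound \eqref{eq:limit} on $N$ enter: for any fixed nonzero integer vector $u$ of bounded norm that is not a multiple of $\beta$, $\langle X_j, u\rangle$ for a row $X_j$ has anti-concentration — it lands in any fixed-length window with probability $O(2^{-N}\cdot\text{length})$ because $\mathcal D$ puts mass at most $c/2^N$ on each integer — so a union bound over all such $u$ in a ball of radius $2^{(2n+p)/2}O(m\sqrt n)$ (a set of size roughly $(2^{(2n+p)/2}m\sqrt n)^{p+n}$) against a per-$u$ failure probability of roughly $(2^{-N}\cdot m)^{n}$ is summable to $O(1/(np))$ precisely when $N$ satisfies \eqref{eq:limit}. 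I would need to be careful that the window length one must beat is itself $\mathrm{poly}(m)$ times the norm bound, which is why the $(2n+p)$-factors and the $10\log(\cdots)$ and $6\log((1+c)np)$ slack appear in the statement.

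Third, the \emph{recovery step}, which is short: conditional on the two good events ($\gcd(\beta)=1$ and $(\hat z_{n+1},\dots,\hat z_{n+p})=\lambda\beta$ with $\lambda\neq 0$), step 6 computes $\gcd(\hat z_{n+1},\dots,\hat z_{n+p})=\gcd(\lambda\beta)=|\lambda|\gcd(\beta)=|\lambda|$ (a sign issue in $\lambda$ is immaterial since $\frac1g$ recovers $\pm\beta$; I would note $\lambda$ and $g$ match in sign, or handle both roots), so $g=|\lambda|\neq 0$ and step 7 returns $\frac1g(\lambda\beta)-Z=\beta-Z=\beta^*$ exactly. Finally I would check the running time: truncation and the translation are polynomial; the entries of $A_m$ have bit-length $O(N+\log m)=\mathrm{poly}(n,p,N,\log\hat R,\log\hat W)$; LLL runs in time polynomial in the dimension $2n+p$ and the log of the largest entry; and Euclid on the $p$ integers $\hat z_{n+i}$, each of bit-length polynomial in the same parameters, is likewise polynomial. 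Collecting the two failure probabilities, each $O(1/(np))$, gives the claimed $1-O(1/(np))$ and hence the whp conclusion as $p\to\infty$. The main obstacle, as indicated, is the second stage: pinning down the exact norm threshold and the union-bound counting so that the requirement on $N$ comes out as \eqref{eq:limit} rather than something weaker, and correctly handling the coordinates distorted by step 3 so that the explicit short vector estimate and the "multiples of $m$ must vanish" argument both go through.
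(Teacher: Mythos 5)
The core of your lattice step has a genuine gap: the explicit short vector you exhibit is wrong, and the error is consequential. You take coefficients $(\beta,1_n,-W)$, whose image under $A_m$ has first $n$ coordinates $m(X\beta-Y_2-W)$; since $X\beta=Y_1-W$ this equals $m(Y_1-Y_2-2W)$, which is of order $m\|W\|_\infty$ on \emph{every} coordinate (and even after fixing the sign to $+W$ you are left with $m(Y_1-Y_2)$, of magnitude up to $5m$ on each coordinate that step 3 modified). Either way your witness has $\mathcal{L}_2$-norm $\Omega(m)$ whenever the noise is nonzero or step 3 acts, so LLL only gives $\|\hat z\|_2\le 2^{(2n+p)/2}\,\Omega(m)\gg m$, and the dichotomy you then invoke --- ``if $\|\hat z\|_2<m$ then the first $n$ coordinates must vanish'' --- is unavailable. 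You are then forced into the ``window'' version of the anti-concentration bound with window length $\mathrm{poly}(m)=2^{\Theta(n+p/2)}$, which inflates the union bound by a factor that hypothesis (\ref{eq:limit}) does not absorb. The correct witness puts $W_1:=Y_2-X\beta$ (the true noise \emph{plus} the step-3 perturbation) in the third coefficient block: then $z_0=A_m(\beta,1_n,W_1)^t=(0_n,\beta,W_1)^t$ has first $n$ coordinates exactly zero and norm at most $(3\hat R+\log p)\sqrt p+(\|W\|_\infty+1)\sqrt n$, with no factor of $m$; the definition of $m$ in step 4 is calibrated precisely so that $2^{(2n+p)/2}$ times this quantity stays below $m$. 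A secondary omission: your union bound counts only the $x_1$ and $x_3$ blocks of a candidate bad lattice vector, but you must also range over the middle block $x_2$, and bounding its size requires a lower bound $|(Y_2)_i|\gtrsim 2^N/(np)^2$ (the paper's Lemma \ref{lemmaY}) --- this, not the witness estimate, is the real reason step 3 of the algorithm exists.

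Your other two stages are essentially fine. The gcd step via a direct sum over primes, $\sum_q\left(1/q+O(1/L)\right)^p$ with $L=\hat R+\log p$, is a legitimate alternative to the paper's route (which pairs consecutive coordinates and applies a Mobius-function count, Lemma \ref{gcdNT}, to get failure probability $\left(1-6/\pi^2+o_p(1)\right)^{\floor{p/2}}$); both yield $\exp(-\Theta(p))$. The recovery step, the sign remark about $g$, and the runtime analysis all match the paper.
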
We defer the proof to Section \ref{ProofExt}.
\begin{remark}
In the statement of Theorem \ref{extention} the only parameters that are assumed to grow to infinity are $p$ and whichever other parameters among $n,R,\|W\|_{\infty},N$ are implied to grow to infinity because of (\ref{eq:limit}). Note in particular that $n$ can remain bounded, including the case $n=1$, if $N$ grows fast enough.
\end{remark}

\begin{remark}
It can be easily checked that the assumptions of Theorem \ref{extention} are satisfied for $n=1$, $N=(1+\epsilon)\frac{p^2}{2}$, $R=1$, $\mathcal{D}=\mathrm{Unif}\{1,2,3,\ldots,2^{(1+\epsilon)\frac{p^2}{2}}\}$ and $W=0$. Under these assumptions, the Theorem's implication is a generalization of the result from \cite{lagarias1985solving} and \cite{FriezeSubset} to the case $\beta^* \in \{-1,0,1\}^p$.
\end{remark}

\subsection{Applications to High-Dimensional Linear Regression}

\subsection*{The Model}
We first define the $Q$-rationality assumption.
\begin{definition}
Let $p,Q \in \mathbb{Z}_{>0}$. We say that a vector $\beta \in \mathbb{R}^p$ satisfies the $Q$\textbf{-rationality assumption} if for all $i \in [p]$, $\beta^*_i=\frac{K_i}{Q}$, for some $K_i \in \mathbb{Z}$.
\end{definition} The high-dimensional linear regression model we are considering is as follows.
\begin{assumptions}\label{ass}Let $n,p,Q \in \mathbb{Z}_{>0}$ and $R,\sigma,c>0$. Suppose
\begin{itemize}
\item[(1)] measurement matrix $X \in \mathbb{R}^{n \times p}$ with iid entries generated according to a continuous distribution $\mathcal{C}$ which has density $f$ with $\|f\|_{\infty} \leq c$ and satisfies $\mathbb{E}[|V|]<+\infty$, where $ V\overset{d}{=}\mathcal{C} $;
\item[(2)] ground truth vector $\beta^*$ satisfies $\beta^* \in [-R,R]^p$ and the $Q$-rationality assumption;
\item[(3)] $Y=X\beta^*+W$ for some noise vector $W \in \mathbb{R}^n$. It is assumed that either $\|W\|_{\infty} \leq \sigma$ or $W$ has iid entries with mean zero and variance at most $\sigma^2$, depending on the context.
\end{itemize}
\end{assumptions}\textbf{Objective:} Based on the knowledge of $Y$ and $X$ the goal is to recover $\beta^*$ using an efficient algorithm and using the smallest number $n$ of samples possible. The recovery should occur with high probability (w.h.p), as $p$ diverges to infinity.

\subsection*{The Lattice-Based Regression (LBR) Algorithm}
As mentioned in the Introduction, we propose an algorithm to solve the regression problem, which we call the Lattice-Based Regression (LBR) algorithm. The exact knowledge of $Q,R,\|W\|_{\infty}$ is not assumed. Instead the algorithm receives as an input, additional to $Y$ and $X$, $\hat{Q} \in \mathbb{Z}_{>0}$ which is an estimated multiple of $Q$, $\hat{R} \in \mathbb{Z}_{>0}$ which is an estimated upper bound in absolute value for the entries of $\beta^*$ and  $\hat{W} \in \mathbb{R}_{>0}$ which is an estimated upper bound in absolute value for the entries of the noise vector $W$. Furthermore an integer number $N \in \mathbb{Z}_{>0}$ is given to the algorithm as an input, which, as we will explain, corresponds to a truncation in the data in the first step of the algorithm. Given $x \in \mathbb{R}$ and $N \in \mathbb{Z}_{>0}$ let $x_N=\mathrm{sign}(x)\frac{\floor{2^N|x|}}{2^N}$, which corresponds to the operation of keeping the first $N$ bits after zero of a real number $x$.

\begin{algorithm}
\caption{Lattice Based Regression (LBR) Algorithm}
\label{algo:LBR}
\setcounter{ALG@line}{1}
\KwIn{$(Y,X,N,\hat{Q},\hat{R},\hat{W})$, $Y \in \mathbb{Z}^n, X \in \mathbb{Z}^{n \times p}$ and $N ,\hat{Q},\hat{R},\hat{W} \in \mathbb{Z}_{>0}$.}
\KwOut{$\hat{\beta^*}$ an estimate of $\beta^*$}
\nl Set  $Y_N=\left( (Y_{i})_N\right)_{i \in [n]}$ and $X_N=\left( (X_{ij})_N\right)_{i \in [n],j \in [p]}$.\\
\nl Set $(\hat{\beta_1})^*$ to be the output of the ELO algorithm with input: \begin{equation*} \left(2^{N}\hat{Q}Y_N,2^{N}X_{N},\hat{Q}\hat{R},2\hat{Q}\left(2^N\hat{W}+\hat{R}p\right)\right). \end{equation*}\\
\nl Output $\hat{\beta^*}=\frac{1}{\hat{Q}}(\hat{\beta_1})^*$.
\end{algorithm}

We now explain informally the steps of the algorithm. In the first step, the algorithm truncates each entry of $Y$ and $X$ by keeping only its first $N$ bits after zero, for some $N \in \mathbb{Z}_{>0}$. This in particular allows to perform finite-precision operations and to call the ELO algorithm in the next step which is designed for integer input. In the second step, the algorithm naturally scales up the truncated data to integer values, that is it scales $Y_N$ by $2^N\hat{Q}$ and $X_N$ by $2^N$. The reason for the additional multiplication of the observation vector $Y$ by $\hat{Q}$ is necessary to make sure the ground truth vector $\beta^*$ can be treated as integer-valued. To see this notice that $Y=X\beta^*+W$ and $Y_N,X_N$ being ``close" to $Y,X$ imply $$2^N\hat{Q}Y_N=2^NX_N(\hat{Q}\beta^*)+\text{ ``extra noise terms"}+2^N\hat{Q}W.$$ Therefore, assuming the control of the magnitude of the extra noise terms, by using the $Q$-rationality assumption and that $\hat{Q}$ is estimated to be a multiple of $Q$,  the new ground truth vector becomes $\hat{Q}\beta^*$ which is integer-valued. The final step of the algorithm consist of rescaling now the output of Step 2, to an output which is estimated to be the original $\beta^*$. In the next subsection, we turn this discussion into a provable recovery guarantee.
\subsection*{Recovery Guarantees for the LBR algorithm}
We state now our main result, explicitly stating the assumptions on the parameters, under which the LBR algorithm recovers \textbf{exactly} $\beta^*$ from bounded but \textbf{adversarial noise} $W$.
\begin{subtheorem}{theorem}
\begin{theorem}\label{main}
Under Assumption \ref{ass} and assuming $W \in [-\sigma,\sigma]^n$ for some $\sigma \geq 0$, the following holds. Suppose  $\hat{Q}$ is a multiple of $Q$, $\hat{R} \geq R$ and
\begin{equation}\label{eq:limit1}
N>\frac{1}{2}\left(2n+p\right)\left(2n+p+10\log \hat{Q}+ 10\log \left(2^{N}\sigma +\hat{R}p \right) +20 \log (3\left(1+c\right)np) \right).
\end{equation}
For any $\hat{W} \geq \sigma$,  the LBR algorithm with input $(Y,X,N,\hat{Q},\hat{R},\hat{W})$ terminates with $\hat{\beta^*}=\beta^*$ w.p. $1-O\left(\frac{1}{np}\right)$ (whp as $p \rightarrow + \infty$) and in time polynomial in $n,p,N,\log \hat{R},\log \hat{W}$ and $\log \hat{Q}$.
\end{theorem}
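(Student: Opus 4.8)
The plan is to reduce the claim to Theorem~\ref{extention} by recognizing the scaled, truncated data fed to the ELO subroutine as an integer regression instance, and then to verify that the parameter transformation preserves hypothesis~(\ref{eq:limit}). Concretely I would set $\tilde X := 2^N X_N \in \mathbb{Z}^{n \times p}$, $\tilde\beta^* := \hat Q \beta^*$, $\tilde Y := 2^N \hat Q Y_N \in \mathbb{Z}^n$, and $\tilde W := \tilde Y - \tilde X \tilde\beta^*$. Because $\beta^*$ obeys the $Q$-rationality assumption and $\hat Q$ is a multiple of $Q$, the vector $\tilde\beta^*$ lies in $\bigl(\mathbb{Z}\cap[-\hat Q R,\hat Q R]\bigr)^p$, so the ELO input $\hat Q\hat R$ is a legitimate upper bound on $\|\tilde\beta^*\|_\infty$. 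Writing $Y_N = Y - \delta_Y$ and $X_N = X - \delta_X$ with $\|\delta_Y\|_\infty < 2^{-N}$ and $|\delta_X|_{ij}<2^{-N}$ entrywise, and substituting $Y = X\beta^* + W$, one obtains the identity $\tilde W = 2^N\hat Q\bigl(W - \delta_Y + \delta_X\beta^*\bigr)$, hence
\[ \|\tilde W\|_\infty \;\le\; 2^N\hat Q\,\sigma + \hat Q + \hat Q p R \;\le\; 2\hat Q\bigl(2^N\sigma + \hat R p\bigr) \;\le\; 2\hat Q\bigl(2^N\hat W + \hat R p\bigr), \]
using $1\le p\hat R$, $R\le\hat R$ and $\sigma\le\hat W$; thus the fourth ELO input $2\hat Q(2^N\hat W + \hat R p)$ is a valid noise estimate, and moreover $\|\tilde W\|_\infty + 1 \le 3\hat Q(2^N\sigma + \hat R p)$.

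Next I would identify the law $\mathcal{D}$ of the entries of $\tilde X$: each $2^N (X_{ij})_N$ is integer valued and lands in a dyadic interval of length $2^{-N}$ determined by $X_{ij}\sim\mathcal{C}$, so since $\|f\|_\infty\le c$ it assigns probability at most $2c/2^N$ to every integer (the factor $2$ absorbing the atom at $0$), and $\mathbb{E}\,|2^N(X_{ij})_N| \le 2^N\,\mathbb{E}_{\mathcal{C}}|V| =: C\,2^N$ with $C<\infty$. Thus $\mathcal{D}$ meets the hypotheses of Theorem~\ref{extention} with parameter $N$ and constants $(2c,C)$. It then remains to check that our assumption~(\ref{eq:limit1}) implies condition~(\ref{eq:limit}) of Theorem~\ref{extention} for this instance, i.e.\ after the substitutions $\hat R\mapsto\hat Q\hat R$, $\|W\|_\infty\mapsto\|\tilde W\|_\infty$, $c\mapsto 2c$. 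Using the bounds above, $\hat Q\hat R\sqrt p + (\|\tilde W\|_\infty+1)\sqrt n \le 4n\,\hat Q(2^N\sigma+\hat R p)$, so the logarithmic term in~(\ref{eq:limit}) is at most $10\log\hat Q + 10\log(2^N\sigma+\hat R p) + 10\log(4n)$; combining this with $\frac1{2n}(2n+p)\le\frac12(2n+p)$, and absorbing the leftover $10\log(4n)$ together with the trailing $6\log((1+2c)np)$ into the padded term $20\log(3(1+c)np)$ of~(\ref{eq:limit1}) — which is legitimate because $2n+p\ge3$, $1+2c\le2(1+c)$, and $(3(1+c)np)^2\ge4n$ — yields~(\ref{eq:limit}).

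With both hypotheses verified, Theorem~\ref{extention} applied to the ELO call with input $\bigl(\tilde Y,\tilde X,\hat Q\hat R,2\hat Q(2^N\hat W+\hat R p)\bigr)$ shows that Step~2 returns exactly $\tilde\beta^* = \hat Q\beta^*$ with probability $1-O(1/(np))$ and in time polynomial in $n,p,N,\log(\hat Q\hat R)$ and $\log\bigl(2\hat Q(2^N\hat W+\hat R p)\bigr)$ — hence polynomial in $n,p,N,\log\hat R,\log\hat W,\log\hat Q$ — after which Step~3 outputs $\frac1{\hat Q}\tilde\beta^* = \beta^*$; the truncation of Step~1 adds only a $\mathrm{poly}(n,p,N)$ cost. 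I expect the only genuine difficulty to be the noise bookkeeping: the truncation injects the three error contributions $W$, $\delta_Y$, and $\delta_X\beta^*$ at mutually incommensurable magnitudes, and one must simultaneously make the ELO noise estimate large enough to dominate all of them while keeping it (together with $\hat Q\hat R$ and $2c$) small enough that the padded inequality~(\ref{eq:limit1}) still forces~(\ref{eq:limit}); the specific constants $3$, $10$, $20$ appearing in~(\ref{eq:limit1}) are precisely what makes this balance work.
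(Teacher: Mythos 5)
Your proposal is correct and follows essentially the same route as the paper's proof: reduce to Theorem~\ref{extention} by feeding it the integer instance $(2^N\hat Q Y_N,\,2^N X_N,\,\hat Q\beta^*,\,W_0)$, bound the aggregated truncation-plus-adversarial noise by $2\hat Q(2^N\sigma+\hat R p)$, verify the density and moment hypotheses on the law of $2^N(X_{ij})_N$, and check that~(\ref{eq:limit1}) implies~(\ref{eq:limit}) after the parameter substitutions. Your observation that the integer $0$ receives mass from an interval of length $2\cdot2^{-N}$, forcing the constant $2c$ rather than $c$, is in fact slightly more careful than the paper's own verification of assumption~(1), and is harmlessly absorbed by the padding in~(\ref{eq:limit1}) exactly as you argue.
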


 Applying  Theorem \ref{main} we establish the following result handling \textbf{random noise} $W$.
\begin{theorem}\label{mainiid}
Under Assumption \ref{ass} and assuming $W \in \mathbb{R}^n$ is a vector with iid entries generating according to an, independent from $X$, distribution $\mathcal{W}$ on $\mathbb{R}$ with mean zero and variance at most $\sigma^2$ for some $\sigma \geq 0$ the following holds. Suppose that $\hat{Q}$ is a multiple of $Q$, $\hat{R} \geq R$,  and
\begin{equation}\label{eq:limit2}
N>\frac{1}{2}\left(2n+p\right)\left(2n+p+10\log \hat{Q}+ 10\log \left(2^{N}\sqrt{np}\sigma +\hat{R}p \right)  +20 \log (3\left(1+c\right)np) \right).
\end{equation}
For any $\hat{W} \geq \sqrt{np}\sigma$  the LBR algorithm with input $(Y,X,N,\hat{Q},\hat{R},\hat{W})$ terminates with $\hat{\beta^*}=\beta^*$ w.p. $1-O\left(\frac{1}{np}\right)$ (whp as $p \rightarrow + \infty$) and in time polynomial in $n,p,N,\log \hat{R},\log \hat{W}$ and $\log \hat{Q}$.
\end{theorem}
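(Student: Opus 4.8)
\noindent\textbf{Proof plan for Theorem~\ref{mainiid}.}
My plan is to derive this random-noise statement directly from the adversarial-noise statement of Theorem~\ref{main}, using the latter as a black box. The key (and essentially only) observation is bookkeeping: condition~\eqref{eq:limit2} is exactly condition~\eqref{eq:limit1} under the substitution $\sigma\mapsto\sqrt{np}\,\sigma$ (so that $2^{N}\sigma$ becomes $2^{N}\sqrt{np}\,\sigma$), and the requirement ``$\hat W\ge\sqrt{np}\,\sigma$'' here is precisely the requirement ``$\hat W\ge\sigma'$'' of Theorem~\ref{main} with $\sigma':=\sqrt{np}\,\sigma$. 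So it suffices to show that, with high probability, $W$ can be treated as an adversarial vector with $\|W\|_\infty\le\sigma'$, and then to invoke Theorem~\ref{main} with $\sigma$ replaced by $\sigma'$; the conclusion $\hat{\beta^*}=\beta^*$ and the polynomial running-time bound (which depends on $\hat W$ only polynomially, through $\log\hat W$) follow verbatim.

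\smallskip
\noindent\emph{Step 1 (a tail bound on $W$).} The coordinates $W_1,\dots,W_n$ are independent, mean zero, with variance at most $\sigma^2$, so Chebyshev's inequality gives $\pr(|W_i|>\sqrt{np}\,\sigma)\le\sigma^2/(np\,\sigma^2)=1/(np)$ for each $i\in[n]$, and a union bound over $i$ yields $\pr(\|W\|_\infty>\sqrt{np}\,\sigma)\le 1/p$ (equivalently, use $\mathbb{E}\|W\|_2^2\le n\sigma^2$, Markov's inequality, and $\|W\|_\infty\le\|W\|_2$). Set $\mathcal E:=\{\|W\|_\infty\le\sqrt{np}\,\sigma\}$, so $\pr(\mathcal E^{c})=O(1/p)=o(1)$; in the regimes of interest for the corollaries ($n$ bounded, or more generally $n=o(p/\log p)$) this is of the same order as the target failure probability $O(1/(np))$.

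\smallskip
\noindent\emph{Step 2 (transfer and conclusion).} Theorem~\ref{main} is stated for a \emph{deterministic} noise vector, whereas $W$ is random here; the bridge is that $W$ is independent of $X$ by hypothesis and that the only internal randomness of the LBR/ELO algorithm --- the uniform vector $Z$ drawn in the first line of ELO --- is independent of $(X,W)$. Condition on $W=w$ for an arbitrary fixed $w$ with $\|w\|_\infty\le\sqrt{np}\,\sigma$. Then Assumption~\ref{ass}(1)--(2) are unchanged, $\hat Q$ is a multiple of $Q$, $\hat R\ge R$, $\hat W\ge\sqrt{np}\,\sigma=\sigma'$, and condition~\eqref{eq:limit1} holds with $\sigma'$ in place of $\sigma$ because it is literally~\eqref{eq:limit2}; hence Theorem~\ref{main} applies to the instance $(Y,X,N,\hat Q,\hat R,\hat W)$ and LBR returns $\hat{\beta^*}=\beta^*$ with probability $\ge 1-O(1/(np))$ over $X$ and $Z$. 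Averaging this conditional guarantee over the law of $W$ restricted to $\mathcal E$ and adding $\pr(\mathcal E^{c})$, LBR outputs $\hat{\beta^*}=\beta^*$ with probability $1-O(1/(np))$ (using the comparison of orders from Step~1), within the claimed polynomial running time.

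\smallskip
\noindent\emph{Where the work is.} There is essentially no obstacle: all the substance lives inside Theorem~\ref{main} (and, behind it, Theorem~\ref{extention}), which are invoked as black boxes. The only two things to spell out are (i) the $\ell_\infty$ tail bound of Step~1 --- which, under a mere variance assumption on $W$, can only be pushed to $O(1/p)$, matching the stated $O(1/(np))$ whenever $n$ stays bounded --- and (ii) the routine measurability argument of Step~2 that lets one apply the deterministic-$W$ Theorem~\ref{main} conditionally on a realized value of the random $W$, which is precisely where the independence $W\perp X$ enters. Neither is hard; the content of the theorem is entirely in the reduction having been set up (in the statements and $N$-thresholds of Theorems~\ref{main} and~\ref{extention}) so that $\sqrt{np}\,\sigma$ is exactly the surrogate noise level that makes \eqref{eq:limit2} reduce to \eqref{eq:limit1}.
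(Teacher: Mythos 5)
Your proposal matches the paper's own proof essentially verbatim: the paper likewise applies Markov's inequality and a union bound to get $\mathbb{P}(\|W\|_\infty \le \sqrt{np}\,\sigma) \ge 1 - 1/p$ and then conditions on this event to invoke Theorem~\ref{main} with $\sqrt{np}\,\sigma$ in place of $\sigma$. Your Step~2 is slightly more careful than the paper about the conditioning/independence bookkeeping and about the $O(1/p)$ versus $O(1/(np))$ failure-probability discrepancy (which the paper silently absorbs), but the argument is the same.
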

\end{subtheorem}

Both proofs of Theorems \ref{main} and \ref{mainiid} are deferred to Section \ref{ProofMain}.

\subsection*{Noise tolerance of the LBR algorithm}

The assumptions (\ref{eq:limit}) and (\ref{eq:limit2}) might make it hard to build an intuition for the truncation level the LBR algorithm provably works. For this reason, in this subsection we \textit{simplify it} and state a Proposition explicitly mentioning the optimal truncation level and hence characterizing the optimal level of noise that the LBR algorithm can tolerate with $n$ samples.

First note that in the statements of Theorem \ref{main} and Theorem \ref{mainiid} the only parameters that are assumed to grow are $p$ and whichever other parameter is implied to grow because of (\ref{eq:limit}) and (\ref{eq:limit2}). Therefore, importantly, $n$ does not necessarily grow to infinity, if for example $N,\frac{1}{\sigma}$ grow appropriately with $p$. That means that Theorem \ref{main} and Theorem \ref{mainiid} imply non-trivial guarantees for \textit{arbitrary sample size} $n$. The proposition below shows that if $\sigma$ is at most exponential in $-(1+\epsilon)\left[\frac{(p+2n)^2}{2n}+(2+\frac{p}{n}) \log \left(RQ\right)\right]$ for some $\epsilon>0$, then for appropriately chosen truncation level $N$ the LBR algorithm recovers exactly the vector $\beta^*$ with $n$ samples. In particular, with one sample ($n=1$) LBR algorithm tolerates noise level up to exponential in $-(1+\epsilon)\left[p^2/2+(2+p) \log (QR)\right]$ for some $\epsilon>0$. On the other hand, if $n=\Theta(p)$ and $\log \left(RQ\right)=o(p)$, the LBR algorithm tolerates noise level up to exponential in $-O(p)$.

\begin{proposition}\label{cor2}
Under Assumption \ref{ass} and assuming $W \in \mathbb{R}^n$ is a vector with iid entries generating according to an, independent from $X$, distribution $\mathcal{W}$ on $\mathbb{R}$ with mean zero and variance at most $\sigma^2$ for some $\sigma \geq 0$, the following holds.

Suppose $p \geq \frac{300}{\epsilon} \log \left(\frac{300}{(1+c) \epsilon} \right)$ and for some $\epsilon>0$,  $\sigma \leq 2^{-(1+\epsilon)\left[\frac{(p+2n)^2}{2n}+(2+\frac{p}{n}) \log \left(RQ\right)\right]}$. Then the LBR algorithm with \begin{itemize} 
\item input $Y,X$, $\hat{Q}=Q,$ $\hat{R}= R$ and $\hat{W}_{\infty}=1$ and
\item truncation level $N$ satisfying $ \log \left(\frac{1}{\sigma}\right) \geq N  \geq (1+\epsilon)\left[\frac{(p+2n)^2}{2n}+(2+\frac{p}{n}) \log \left(RQ\right)\right],$ 

\end{itemize} terminates with $\hat{\beta^*}=\beta^*$ w.p. $1-O\left(\frac{1}{np}\right)$ (whp as $p \rightarrow + \infty$) and in time polynomial in $n,p,N,\log \hat{R},\log \hat{W}$ and $\log \hat{Q}$.
\end{proposition}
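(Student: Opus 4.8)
The plan is to derive Proposition~\ref{cor2} directly from Theorem~\ref{mainiid}, by instantiating the latter with $\hat Q=Q$, $\hat R=R$ and $\hat W=\hat W_\infty=1$ and checking that every hypothesis of that theorem holds under the assumptions of the Proposition. Since $\hat Q=Q$ is (trivially) a multiple of $Q$, $\hat R=R\ge R$, and the stated conclusion --- exact recovery with probability $1-O(1/(np))$ and running time polynomial in $n,p,N,\log\hat R,\log\hat W,\log\hat Q$, hence in $n,p,N,\log R,\log Q$ --- is exactly the conclusion of Theorem~\ref{mainiid}, the whole task reduces to verifying the two remaining hypotheses: (i) $\hat W\ge\sqrt{np}\,\sigma$, and (ii) the truncation inequality~(\ref{eq:limit2}).

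Hypothesis (i) is immediate. Since $\tfrac{(p+2n)^2}{2n}\ge p+2n$, the assumed bound on $\sigma$ gives $\sigma\le 2^{-(1+\epsilon)(p+2n)}\le 2^{-(p+2n)}\le (np)^{-1}$, so $\sqrt{np}\,\sigma\le 1=\hat W$.

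The heart of the argument is hypothesis (ii). The key observation is that the admissible truncation level is capped by $N\le\log(1/\sigma)$, hence $2^N\sigma\le 1$ and therefore
\[
  2^N\sqrt{np}\,\sigma+\hat R p \;\le\; \sqrt{np}+Rp \;\le\; (2n+p)(1+R).
\]
Substituting this bound together with $\hat Q=Q$ into the right-hand side of~(\ref{eq:limit2}), one bounds it by an expression of the shape $\tfrac12(2n+p)\big((2n+p)+O(\log(RQ))+O(\log(np))\big)$, where the $O(\log(RQ))$ term collects the $\log\hat Q$ contribution and the $\log(1+R)$ contribution, and the $O(\log(np))$ term collects $\log(2n+p)$ and $\log(3(1+c)np)$. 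On the other hand the Proposition furnishes $N\ge (1+\epsilon)\big[\tfrac{(p+2n)^2}{2n}+(2+\tfrac pn)\log(RQ)\big]$. Comparing the two, the principal (quadratic) terms agree up to a surplus proportional to $\epsilon\,(p+2n)^2/(2n)\ge \epsilon(p+2n)$; the $O(\log(RQ))$ residual is absorbed by the $(1+\epsilon)(2+p/n)\log(RQ)$ term carried along in the lower bound on $N$; and the leftover $O(\log(np))$ residual is dominated by the surplus $\epsilon(p+2n)$ precisely because $p\ge\tfrac{300}{\epsilon}\log\!\big(\tfrac{300}{(1+c)\epsilon}\big)$ --- which is exactly the role of that hypothesis, with the density bound $c$ entering through the $\log(3(1+c)np)$ term. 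This establishes~(\ref{eq:limit2}), so Theorem~\ref{mainiid} applies and yields the Proposition.

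The only real work is the constant bookkeeping in the last paragraph; there is no new probabilistic or lattice-theoretic ingredient beyond Theorem~\ref{mainiid}. The point that needs the most care is arranging the universal constants hidden in the $O(\cdot)$'s so that the clean threshold $p\ge\tfrac{300}{\epsilon}\log\!\big(\tfrac{300}{(1+c)\epsilon}\big)$ --- rather than some more cumbersome condition --- already suffices to swamp every lower-order logarithmic term, which one does via elementary inequalities of the form $\log x\le \epsilon x/C$, valid once $x$ exceeds a threshold determined by $\epsilon$ and $C$.
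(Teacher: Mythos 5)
Your proposal follows the paper's own route essentially verbatim: reduce to Theorem \ref{mainiid} with $\hat Q=Q$, $\hat R=R$, $\hat W=1$, use the cap $N\le\log(1/\sigma)$ (i.e.\ $2^N\sigma\le 1$) to control the $\log\bigl(2^N\sqrt{np}\,\sigma+\hat Rp\bigr)$ term, and invoke the hypothesis $p\ge\tfrac{300}{\epsilon}\log\bigl(\tfrac{300}{(1+c)\epsilon}\bigr)$ to let the $\epsilon$-surplus in the lower bound on $N$ absorb the residual logarithmic terms. This is correct and matches the paper's argument; the only difference is that you leave the final constant bookkeeping schematic where the paper carries it out explicitly.
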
The proof of Proposition \ref{cor2} is deferred to Section \ref{Rest}.

It is worth noticing that in the noisy case $(\sigma>0)$ the above Proposition requires the truncation level $N$ to be upper bounded by $\log (\frac{1}{\sigma})$, which implies the seemingly counter-intuitive conclusion that revealing more bits of the data after some point can ``hurt" the performance of the recovery mechanism. Note that this is actually justified because of the presence of adversarial noise of magnitute $\sigma$. In particular, handling an arbitrary noise of absolute value at most of the order $\sigma$ implies that the only bits of each observation that are certainly unaffected by the noise are the first $\log \left(\frac{1}{\sigma}\right)$ bits.  Any bit in a later position could have potentially changed because of the noise. This correct middle ground for the truncation level $N$ appears to be necessary also in the analysis of the synthetic experiments with the LBR algorithm (see Section \ref{Exp}).
\subsection*{Information Theoretic Bounds} 

In this subsection, we discuss the maximum noise that can be tolerated information-theoretically in recovering a $\beta^* \in [-R,R]^p$ satisfying the $Q$-rationality assumption. We establish that under Gaussian white noise, any successful recovery mechanism can tolerate noise level at most exponentially small in $-\left[p \log \left(QR \right)/n\right]$.

\begin{proposition}\label{InfTh}
Suppose that $X \in \mathbb{R}^{n \times p}$ is a vector with iid entries following a continuous distribution $\mathcal{D}$ with $\mathbb{E}[|V|]<+\infty$, where $ V\overset{d}{=}\mathcal{D}$, $\beta^* \in [-R,R]^p$ satisfies the $Q$-rationality assumption,  $W \in \mathbb{R}^n$ has iid $N(0,\sigma^2)$ entries and $Y=X\beta^*+W$. Suppose furthermore that $\sigma > R(np)^3 \left(2^{\frac{2p \log \left(2QR+1\right)}{n}}-1\right)^{-\frac{1}{2}}$. Then there is \textbf{no} mechanism which, whp as $p \rightarrow + \infty$, recovers \textbf{exactly} $\beta^*$ with knowledge of $Y,X,Q,R,\sigma$. That is, for any function $\hat{\beta^*}=\hat{\beta^*}\left(Y,X,Q,R,\sigma\right)$ we have
\begin{equation*}
\limsup_{p \rightarrow + \infty} \mathbb{P}\left(\hat{\beta^*}=\beta^* \right)<1. 
\end{equation*}
\end{proposition}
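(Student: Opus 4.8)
The plan is a two-point (Le Cam) impossibility argument: it suffices to exhibit a distribution over $\beta^*$ meeting all the stated constraints under which no estimator $\hat\beta^*(Y,X,Q,R,\sigma)$ has success probability tending to $1$. We may assume $QR\in\mathbb Z_{>0}$ (as the notation $2QR+1$ indicates, and as is needed for a nonvacuous claim, since $QR<1$ forces $\beta^*=0$), so that each coordinate has $2QR+1$ admissible values $K_i/Q\in[-R,R]$ and the set $\mathcal B$ of admissible vectors has $|\mathcal B|=M:=(2QR+1)^p$. Put $T:=\lceil(np)^{3/2}\rceil$ and $\mathcal G:=\{|X_{ij}|\le T\ \forall\,i,j\}$; since the $X_{ij}$ are i.i.d.\ copies of $V$ with $\mathbb E|V|<\infty$, Markov's inequality and a union bound give $\mathbb P(\mathcal G^{c})\le np\,\mathbb E|V|/T\le\mathbb E|V|/\sqrt{np}\to0$. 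On $\mathcal G$ one has $|(X\beta)_i|\le R\sum_j|X_{ij}|\le RpT$ for every $\beta\in\mathcal B$, so all $M$ points $\{X\beta:\beta\in\mathcal B\}$ lie in the cube $[-RpT,RpT]^n$. I will first find, for each $X\in\mathcal G$, two distinct admissible vectors whose images under $X$ are much closer together than $\sigma$, and then conclude by near-indistinguishability of the associated Gaussian observation laws.

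\textbf{Pigeonhole step.} Fix $X\in\mathcal G$ and partition $[-RpT,RpT]^n$ into $K^n$ congruent subcubes, where $K$ is the largest integer with $K^n<M$, so $K\ge M^{1/n}-1=(2QR+1)^{p/n}-1$. Two distinct $\beta_0\ne\beta_1\in\mathcal B$ then fall in a common subcube, so $\|X(\beta_0-\beta_1)\|_\infty\le 2RpT/K$, and using $(2QR+1)^{p/n}-1\ge\tfrac12\sqrt{(2QR+1)^{2p/n}-1}$ (valid once $(2QR+1)^{p/n}\ge2$, which holds in the relevant regime),
\[
\|X(\beta_0-\beta_1)\|_2\ \le\ \frac{2\sqrt n\,RpT}{(2QR+1)^{p/n}-1}\ \le\ \frac{4\sqrt n\,RpT}{\sqrt{2^{\,2p\log(2QR+1)/n}-1}}.
\]
Substituting $T\le 2(np)^{3/2}$ and the hypothesis $\sigma>R(np)^3\bigl(2^{\,2p\log(2QR+1)/n}-1\bigr)^{-1/2}$, the right-hand side is at most $C\sigma/(n\sqrt p)$ for an absolute constant $C$; hence $\|X(\beta_0-\beta_1)\|_2=o(\sigma)$ uniformly over $X\in\mathcal G$.

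\textbf{Conclusion, and the hard part.} Choose such a pair $\beta_0(X),\beta_1(X)$ for each $X\in\mathcal G$ by a fixed rule, and consider the prior: given $X$, let $\beta^*$ be uniform on $\{\beta_0(X),\beta_1(X)\}$ when $X\in\mathcal G$ and $\beta^*=0$ otherwise; this is supported on $\mathcal B$, hence realizes the model. Conditionally on $X\in\mathcal G$ any estimator's success probability is at most $\tfrac12\bigl(1+\mathrm{TV}(N(X\beta_0,\sigma^2 I_n),N(X\beta_1,\sigma^2 I_n))\bigr)$, and this total variation is at most $\|X(\beta_0-\beta_1)\|_2/(\sigma\sqrt{2\pi})=o(1)$ by the pigeonhole step. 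Therefore, for any $\hat\beta^*=\hat\beta^*(Y,X,Q,R,\sigma)$,
\[
\mathbb P\bigl(\hat\beta^*=\beta^*\bigr)\ \le\ \mathbb P(\mathcal G)\cdot\tfrac12\bigl(1+o(1)\bigr)+\mathbb P(\mathcal G^{c})\ \longrightarrow\ \tfrac12\qquad(p\to\infty),
\]
so $\limsup_{p\to+\infty}\mathbb P(\hat\beta^*=\beta^*)\le\tfrac12<1$. I expect the main obstacle to be the pigeonhole step: bounding the enclosing cube uniformly over all $M$ candidates using only the first-moment hypothesis $\mathbb E|V|<\infty$ (no variance of $X$ is available), and verifying that the exponential pigeonhole savings $(2QR+1)^{p/n}$ genuinely dominate both the cube diameter $\asymp RpT$ and the polynomial factor $T$, so that the resulting $\|X(\beta_0-\beta_1)\|_2$ drops below $\sigma$ exactly in the stated range $\sigma>R(np)^3(2^{2p\log(2QR+1)/n}-1)^{-1/2}$ --- this is where the generous polynomial $(np)^3$ gets spent. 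The Gaussian total-variation estimate and the two-point reduction are routine, and minor bookkeeping (integrality of $QR$, $n$ not enormous relative to $p$, $p$ large) is easily absorbed.
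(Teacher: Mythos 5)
Your route is genuinely different from the paper's. The paper runs a Shannon channel-capacity converse: it bounds $\|X\beta^*\|_2^2\le R^2(np)^6$ whp, treats the $(2QR+1)^p$ admissible vectors as a codebook for an $n$-use Gaussian channel with that power constraint, and derives $\frac{p\log(2QR+1)}{n}\le\frac12\log\left(1+R^2(np)^6/\sigma^2\right)$, whose negation is exactly the stated condition on $\sigma$. Your pigeonhole-plus-two-point scheme replaces the capacity formula by an explicit collision $\|X(\beta_0-\beta_1)\|_2=o(\sigma)$ and a Gaussian total-variation bound; the arithmetic checks out (your collision bound comes to $8\sigma/(n\sqrt p)$, which is where the $(np)^3$ is spent), and in that respect it is more self-contained than the paper's appeal to channel capacity.

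There is, however, a genuine gap. Your confusable pair $\beta_0(X),\beta_1(X)$ is a function of $X$, so the ``prior'' you place on $\beta^*$ is correlated with the measurement matrix: what you bound is the success probability under a joint law in which $\beta^*$ is chosen after seeing $X$. This does not yield a fixed $\beta^*$ (nor any prior independent of $X$) on which a given estimator fails, which is the natural reading of the model; and the dependence cannot be stripped out within a two-point scheme, since for a fixed pair $\beta_0\ne\beta_1$ one typically has $\|X(\beta_0-\beta_1)\|_2\gtrsim \sqrt n/Q$, far above $\sigma$ at the stated threshold --- the exponential pigeonhole gain exists only conditionally on $X$. The repair is to keep $\beta^*$ uniform on all of $\mathcal B$, independent of $X$, and use the $X$-dependent boxes only in the analysis: with $K^n\le M/2$ boxes, at least $M/2$ candidates share a box with another candidate, so a constant fraction of $\mathcal B$ can be grouped into disjoint confusable pairs, and summing the two-point testing inequality $P_{\beta}(\mathrm{err})+P_{\beta'}(\mathrm{err})\ge 1-\mathrm{TV}$ over these pairs bounds the Bayes error below by a constant; averaging then produces a fixed bad $\beta^*$. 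With that modification (or by falling back on a Fano-type converse, as the paper implicitly does), your argument goes through.
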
The proof of Proposition \ref{InfTh} is deferred to Section \ref{Rest}.

\subsection*{Sharp Optimality of the LBR Algorithm}
Using Propositions \ref{cor2} and \ref{InfTh} the following \textbf{sharp} result is established. 
\begin{proposition}\label{optimal}
Under Assumptions \ref{ass} where $W \in \mathbb{R}^n$ is a vector with iid $N(0,\sigma^2)$ entries the following holds. Suppose that $n=o\left(\frac{p}{\log p}\right)$ and $ RQ=2^{\omega(p)}$. Then for $\sigma_0:=2^{-\frac{p \log \left(RQ\right)}{n}}$ and $\epsilon>0$:
\begin{itemize}
\item if $ \sigma>\sigma_0^{1-\epsilon}$,then the w.h.p. exact recovery of $\beta^*$ from the knowledge of $Y,X,Q,R,\sigma$ is impossible.
\item if $\sigma<\sigma_0^{1+\epsilon}$, then the w.h.p. exact recovery of $\beta^*$ from the knowledge of $Y,X,Q,R,\sigma$ is possible by the LBR algorithm.
\end{itemize}
\end{proposition}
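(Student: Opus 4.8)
The plan is to obtain the two bullets of Proposition~\ref{optimal} directly from, respectively, the information-theoretic lower bound of Proposition~\ref{InfTh} and the algorithmic guarantee of Proposition~\ref{cor2}; the only real work will be to check that, under $n=o(p/\log p)$ (equivalently $p/n=\omega(\log p)$) and $RQ=2^{\omega(p)}$ (equivalently $\log(RQ)=\omega(p)$), the quantity $\sigma_0=2^{-p\log(RQ)/n}$ equals, up to a $1\pm o(1)$ factor in its exponent, the common value of the thresholds appearing in those two propositions, so that a fixed multiplicative $\epsilon$-slack in the exponent of $\sigma$ swallows all lower-order polynomial and logarithmic corrections.

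\emph{Impossibility ($\sigma>\sigma_0^{1-\epsilon}$).} First I would record that $\sigma>\sigma_0^{1-\epsilon}$ means $\log(1/\sigma)<(1-\epsilon)p\log(RQ)/n$, and that it suffices to verify, for $p$ large, the hypothesis of Proposition~\ref{InfTh}, namely $\sigma>R(np)^3\bigl(2^{2p\log(2QR+1)/n}-1\bigr)^{-1/2}$. Since $2^x-1\ge2^{x-1}$ whenever $x\ge1$ and the exponent $2p\log(2QR+1)/n$ exceeds $1$ for $p$ large, the right-hand side is at most $2^{1/2}R(np)^3\,2^{-p\log(2QR+1)/n}$, so after taking $\log_2$ it is enough that
\[
(1-\epsilon)\frac{p\log(RQ)}{n}\;\le\;\frac{p\log(2QR+1)}{n}-\log R-3\log(np)-\tfrac12 .
\]
Using $\log(2QR+1)\ge\log(QR)$, the gap between the two sides is at least $\epsilon\,\frac{p}{n}\log(RQ)-\log R-3\log(np)-\tfrac12$, and here $\frac{p}{n}\log(RQ)=\omega(\log p)\cdot\omega(p)$ dominates $\log R+3\log(np)\le\log(RQ)+6\log p$ for $p$ large; so the displayed inequality holds eventually, Proposition~\ref{InfTh} applies, and $\limsup_p\mathbb{P}(\hat{\beta^*}=\beta^*)<1$ for every estimator $\hat{\beta^*}(Y,X,Q,R,\sigma)$.

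\emph{Achievability ($\sigma<\sigma_0^{1+\epsilon}$).} Here $\log(1/\sigma)>(1+\epsilon)p\log(RQ)/n$. I would apply Proposition~\ref{cor2} with $\hat Q=Q$, $\hat R=R$, $\hat W_\infty=1$, the constant $\epsilon'=\epsilon/3$ in place of its $\epsilon$, and truncation level $N=\lceil(1+\epsilon')B\rceil$, where $B:=\frac{(p+2n)^2}{2n}+\bigl(2+\frac{p}{n}\bigr)\log(RQ)$. The condition $p\ge\frac{300}{\epsilon'}\log\frac{300}{(1+c)\epsilon'}$ is a fixed-constant requirement, satisfied for $p$ large. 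Expanding $B=\frac{p^2}{2n}+2p+2n+2\log(RQ)+\frac{p}{n}\log(RQ)$ and using $\log(RQ)=\omega(p)$ together with $p/n\to\infty$, the term $\frac{p}{n}\log(RQ)$ dominates each of $\frac{p^2}{2n},2p,2n,2\log(RQ)$, so $B=(1+o(1))\frac{p}{n}\log(RQ)$ and hence $(1+\epsilon')B\le(1+\epsilon/2)\frac{p}{n}\log(RQ)<(1+\epsilon)\frac{p}{n}\log(RQ)<\log(1/\sigma)$ for $p$ large, the gaps tending to infinity so that the rounding in $N$ is harmless. Then $N$ is a positive integer with $\log(1/\sigma)\ge N\ge(1+\epsilon')B$ and $\sigma<\sigma_0^{1+\epsilon}\le2^{-(1+\epsilon')B}$, so every hypothesis of Proposition~\ref{cor2} is met and the LBR algorithm outputs $\hat{\beta^*}=\beta^*$ with probability $1-O(1/(np))$.

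\emph{Main obstacle.} There is no genuinely hard step; the entire proof is the verification that the polynomial and logarithmic correction factors ($R(np)^3$ in Proposition~\ref{InfTh}; the $\frac{(p+2n)^2}{2n}$ and $2\log(RQ)$ terms and the constant lower bound on $p$ in Proposition~\ref{cor2}) are absorbed by the fixed $\epsilon$-slack in the exponent of $\sigma$. That absorption is exactly where the hypotheses $\log(RQ)=\omega(p)$ and $p/n=\omega(\log p)$ are used, and nothing beyond Propositions~\ref{InfTh} and~\ref{cor2} is needed.
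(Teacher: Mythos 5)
Your proposal is correct and follows essentially the same route as the paper: the impossibility direction is obtained by verifying the hypothesis of Proposition~\ref{InfTh} and the achievability direction by checking the hypotheses of Proposition~\ref{cor2}, with the conditions $n=o(p/\log p)$ and $\log(RQ)=\omega(p)$ used exactly to show that the bracketed exponent collapses to $(1+o(1))\frac{p}{n}\log(RQ)$ so the $\epsilon$-slack absorbs all lower-order terms. If anything, your write-up is slightly more explicit than the paper's (e.g.\ the $2^x-1\ge 2^{x-1}$ bound and the integer rounding of $N$), but the substance is identical.
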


The proof of Proposition \ref{optimal} is deferred to Section \ref{Rest}.

\section{Synthetic Experiments}\label{Exp}

\begin{figure}[h]
        \begin{minipage}[b]{0.9\linewidth}
            \centering
            \includegraphics[width=\textwidth,scale=0.8]{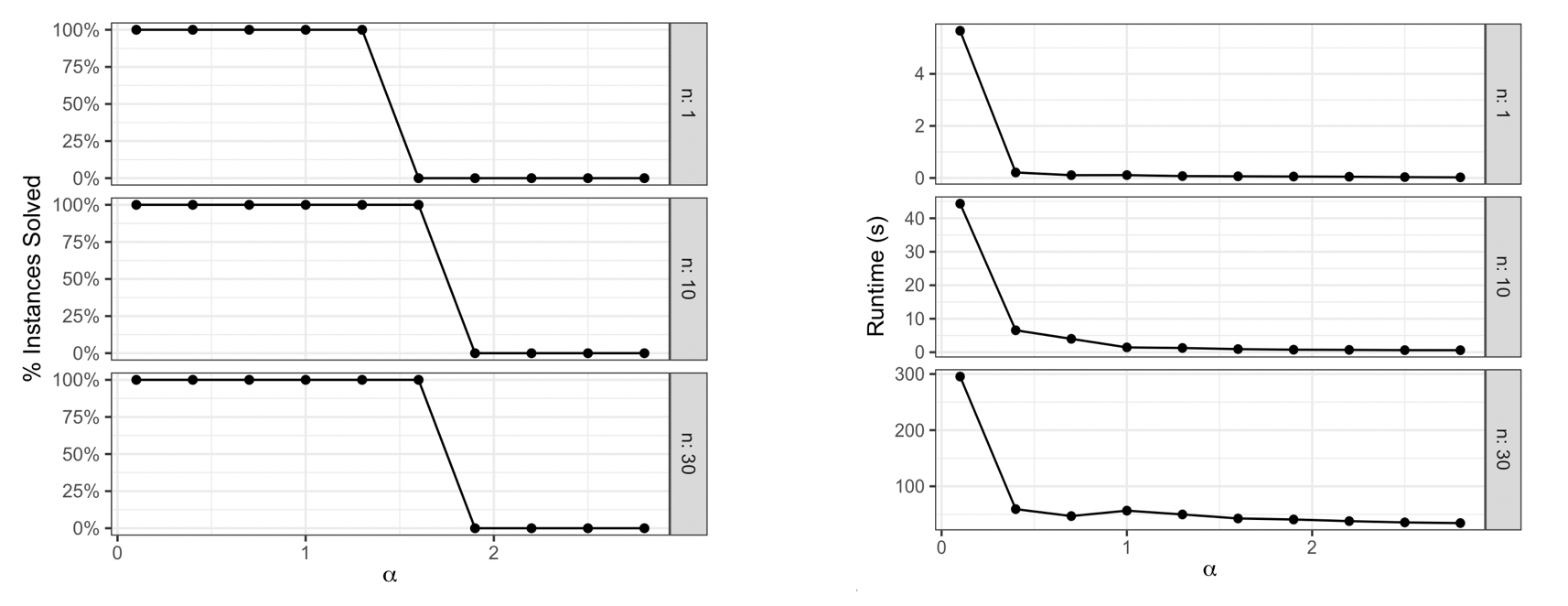}
\caption{Average performance and runtime of ELO over 20 instances with $p=30$ features and $n=1,10,30$ samples.  }
            \label{fig:a}
        \end{minipage}
    
    \end{figure} 

In this section we present an experimental analysis of the ELO and LBR algorithms. 

\textit{ ELO algorithm:} We focus on $p=30$ features sample sizes $n=1,n=10$ and $n=30$, $R=100$ and zero-noise $W=0$. Each entry of $\beta^*$ is iid $\mathrm{Unif}\left(\{1,2,\ldots,R=100\}\right)$. For 10 values of $\alpha \in (0,3)$, specifically $\alpha \in \{0.25, 0.5, 0.75, 1, 1.3, 1.6, 1.9, 2.25, 2.5, 2.75 \}$, we generate the entries of $X$ iid $\mathrm{Unif}\left(\{1,2,3,\ldots,2^{N}\}\right)$ for $N=\frac{p^2}{2 \alpha  n} $. For each combination of $n,\alpha$ we generate 20 independent instances of inputs. We plot in Figure 1 the fractions of instances where the output of the ELO algorithm outputs exactly $\beta^*$ and the average termination time of the algorithm.

\textbf{Comments:} First, we observe that importantly the algorithm recovers the vectors correctly on all $\alpha<1$-instances with $p=30$ features, even if our theoretical guarantees are only for large enough $p$. Second, Theorem \ref{extention} implies that if $N> \left(2n+p \right)^2/2n$ and large $p$, ELO recovers $\beta^*$, with high probability. In the experiments we observe that indeed ELO algorithm works in that regime, as then $\alpha=\frac{p^2}{2n N}<1$. Also the experiments show that ELO works for larger values of $\alpha$.  Finally, the termination time of the algorithm was on average 1 minute and worst case 5 minutes, granting it reasonable for many applications.

 \textit{LBR algorithm:} We focus on $p=30$ features, $n=10$ samples, $Q=1$ and $R=100$. We generate each entry of $\beta^*$ w.p. 0.5 equal to zero and w.p. 0.5, $\mathrm{Unif}\left(\{1,2,\ldots,R=100\}\right)$. We generate the entries of $X$ iid $U(0,1)$ and of $W$ iid $U(-\sigma,\sigma)$ for $\sigma \in \{0,e^{-20},e^{-12},e^{-4}\}$. We generate 20 independent instances for any combination of $\sigma$ and truncation level $N$. We plot the fraction of instances where the output of LBR algorithm is exactly $\beta^*$.  

\begin{figure}[h]
         \centering

            \includegraphics[width=0.5\textwidth, scale=0.5]{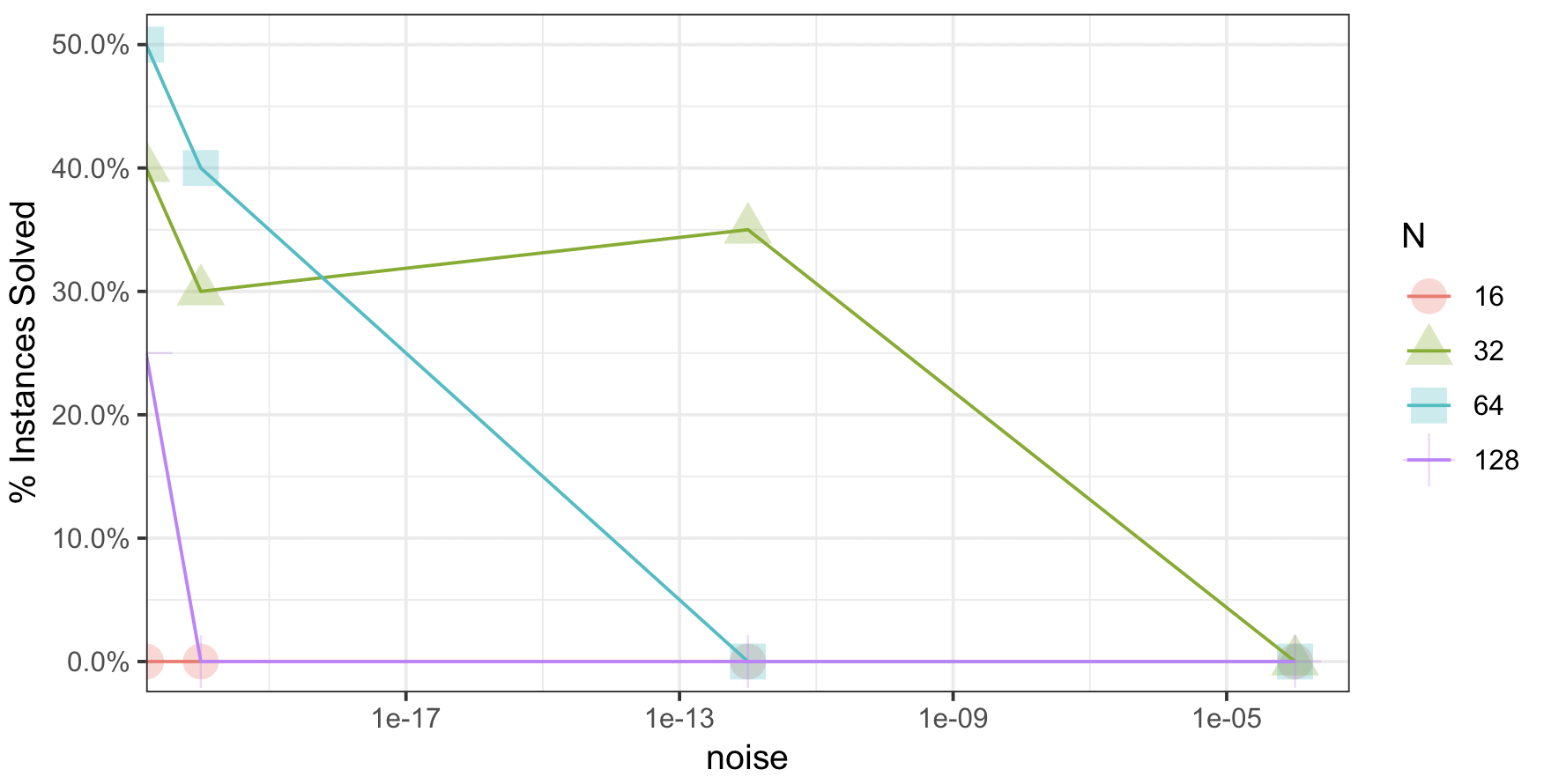}
\caption{Average performance of LBR algorithm for various noise and truncation levels.  }
            \label{fig:a}

    \end{figure}

 \textbf{Comments:} The experiments show that, first LBR works correctly in many cases for the moderate value of $p=30$ and second that there is indeed an appropriate tuned truncation level $(2n+p)^2/2n<N<\log \left(1/\sigma\right)$ for which LBR succeeds. The latter is in exact agreement with Proposition \ref{cor2}.

\section{Proof of Theorem \ref{extention}}\label{ProofExt}
\begin{proof}
We first observe that directly from (\ref{eq:limit}), 
\begin{align*} 
N &\geq 10 \log \left( \sqrt{p}+\sqrt{n} \left(\|W\|_{\infty}+1\right) \right)\\
& \geq 5  \log \left( \sqrt{p}\sqrt{n} \left(\|W\|_{\infty}+1\right) \right), \text{ from the elementary }a+b  \geq \sqrt{ab}\\
& \geq 2 \log \left( pn \left(\|W\|_{\infty}+1\right) \right).
\end{align*}Therefore $2^N \geq \left(pn \left(1+\|W\|_{\infty}\right) \right)^2$ which easily implies $$\frac{\|W\|_{\infty}}{2^N} \leq \frac{1}{n^2p^2}=\delta,$$where we set for convenience $\delta=\delta_p:=\frac{1}{n^2p^2}.$

\begin{lemma}\label{lemmaY}
For all $i \in [n]$, $|(Y_2)_i| \geq \frac{3}{2} \delta2^N$, w.p.  at least $1-O \left( \frac{1}{np} \right).$
\end{lemma}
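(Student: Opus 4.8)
The plan is to reduce the statement to a one-dimensional anti-concentration estimate for each coordinate of $Y_1=X(\beta^*+Z)+W$, and to observe that the clipping performed in Step~2 of the ELO algorithm can only increase magnitudes. First I would record the purely deterministic fact that $|(Y_2)_i|\ge |(Y_1)_i|$ for every $i\in[n]$: if $|(Y_1)_i|<3$ then $(Y_2)_i=3$, so $|(Y_2)_i|=3>|(Y_1)_i|$, and otherwise $(Y_2)_i=(Y_1)_i$. Hence it suffices to show that $|(Y_1)_i|\ge \tfrac{3}{2}\delta 2^N$ holds simultaneously for all $i\in[n]$ with probability $1-O(1/(np))$.

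Write $a_j:=\beta^*_j+Z_j$, so that $(Y_1)_i=\sum_{j\in[p]} a_j X_{ij}+W_i$. Fix $i\in[n]$ and condition on $Z$ together with the entries $X_{ij}$ for $j=2,\dots,p$; since $W$ is a fixed (possibly adversarial) integer vector, this makes $(Y_1)_i=a_1 X_{i1}+K$ for a constant $K$ and the fixed integer $a_1=\beta^*_1+Z_1$, which satisfies $a_1\ge -R+(\hat R+1)\ge 1$ because $\hat R\ge R$. By Assumption~(1) the entry $X_{i1}$ puts mass at most $c/2^N$ on each integer, hence $a_1X_{i1}$ puts mass at most $c/2^N$ on each point of $a_1\mathbb{Z}$; an interval of length $2t$ contains at most $2t/a_1+1\le 2t+1$ such points, so $\mathbb{P}\!\left(|(Y_1)_i|<t\right)\le (2t+1)\,c/2^N$ for every $t>0$, and this bound remains valid after removing the conditioning.

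Finally I would take $t=\tfrac{3}{2}\delta 2^N=\tfrac{3}{2n^2p^2}2^N$, so that $(2t+1)c/2^N=3c\delta+c/2^N$; since the computation preceding the lemma already gives $2^N\ge (np(1+\|W\|_\infty))^2\ge (np)^2$, this is at most $3c/(n^2p^2)+c/(n^2p^2)=4c/(n^2p^2)$. A union bound over $i\in[n]$ yields $\mathbb{P}\!\left(\exists i:\ |(Y_1)_i|<\tfrac{3}{2}\delta 2^N\right)\le 4c/(np^2)=O(1/(np))$, which combined with the first paragraph finishes the proof. The only genuine content is the anti-concentration estimate: conditioning on all but one entry of the $i$-th row of $X$ collapses $(Y_1)_i$ into a shifted, integer-scaled copy of a single entry of $X$, whose law is spread over a scale $\sim 2^N$, while the target window $\tfrac{3}{2}\delta 2^N$ is narrower by a factor $\approx (np)^2$ — and this gap is exactly what the lower bound~\eqref{eq:limit} on $N$ provides. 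The possibly adversarial nature of $W$ is harmless because $W_i$ enters only as an additive constant after the conditioning.
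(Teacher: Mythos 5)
Your proof is correct and follows essentially the same route as the paper's: condition on all but one entry of the $i$-th row of $X$, use the $c/2^N$ bound on the atoms of $\mathcal{D}$ to control the probability that the $i$-th observation falls in a window of width $O(\delta 2^N)$, and take a union bound over $i\in[n]$. Your bookkeeping is in fact slightly cleaner, since absorbing $W_i$ into the conditioning constant and using the deterministic inequality $|(Y_2)_i|\ge|(Y_1)_i|$ lets you avoid the paper's separate case analysis on whether $\delta 2^N\ge 2$ and the attendant triangle-inequality loss.
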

\begin{proof}First if $\delta 2^N < 2$, for all $i \in [n]$, $|(Y_2)_i| \geq 3 \geq \frac{3}{2} \delta2^N$, because of the second step of the algorithm. 

Assume now that $\delta 2^N \geq  2$. In that case first observe $Y_1:=Y+XZ=X(\beta^*+Z)+W$ and therefore from the definition of $Y_2$, $Y_2=X \left(\beta^*+Z\right)+W_1$ for some $W_1 \in \mathbb{Z}^n$ with $\|W_1\|_{\infty} \leq \|W\|_{\infty}+1$.
Letting $\beta=\beta^*+Z$ we obtain that for all $i \in [n]$, $Y_i=\inner{X^{(i)}}{\beta}+(W_1)_i$, where $X^{(i)}$ is the $i$-th row of $X$, and therefore $$(Y_2)_i \geq |\sum_{j=1}^p X_{ij} \beta_j|-\|W_1\|_{\infty} \geq  |\sum_{j=1}^pX_{ij} \beta_j|-\|W\|_{\infty}-1.$$Furthermore $\hat{R} \geq  R$ implies $\beta \in [1,3\hat{R}+\log p]^p$.

We claim that conditional on $\beta \in [1,3\hat{R}+p]^p$ for all $i=1,\ldots,n$, $|\sum_{j=1}^p X_{ij} \beta_j| \geq 3\delta 2^N$  w.p. at least $1-O \left( \frac{1}{np} \right)$ with respect to the randomness of $X$. Note that this last inequality alongside with  $\|W\|_{\infty} \leq \delta 2^N$ implies for all $i$, $|(Y_2)_i| \geq 2 \delta 2^N-1$. Hence since $\delta 2^N \geq 2$ we can conclude from the claim that for all $i$, $|(Y_2)_i| \geq \frac{3}{2} \delta 2^N$ w.p. at least $1-O \left( \frac{1}{np} \right)$ Therefore it suffices to prove the claim to establish Lemma \ref{lemmaY}.

In order to prove the claim, observe that for large enough $p$, \begin{align*}
\mathbb{P}\left( \bigcup_{i=1}^n \{ |\sum_{j=1}^p X_{ij} \beta_j|<3\delta 2^N \}\right) &\leq \sum_{i=1}^n\mathbb{P}\left(|\sum_{j=1}^p X_{ij} \beta_j|<3 \delta 2^N\right)\\
&=\sum_{i=1}^n \sum_{k \in \mathbb{Z} \cap [-3\delta 2^N,3\delta 2^N]} \mathbb{P}\left(\sum_{j=1}^p X_{ij}\beta_j=k\right)\\ 
&\leq n  (6 \delta 2^N+1) \frac{c}{2^N}\\
& \leq 7c n \delta = O\left(\frac{1}{np} \right),\end{align*} where we have used that given $\beta_{1} \not = 0$ for $i \in [p]$ and $k \in \mathbb{Z}$ the event $\{\sum_{j=1}^p X_{ij} \beta_j=k\}$ implies that the random variable $X_{i1}$ takes a specific value, conditional on the realization of the remaining elements $X_{i2},\ldots,X_{ip}$ involved in the equations. Therefore by our assumption on the iid distribution generating the entries of $X$, each of these events has probability at most $c/2^N$. Note that the choice of $\beta_1$, as opposed to choosing some $\beta_i$ with $i>1$, was arbitrary in the previous argument. The last inequality uses the assumption $\delta 2^N\geq 1$ and the final convergence step is justified from $\delta =O(\frac{1}{n^2 p})$ and that $c$ is a constant.
\end{proof}
Next we use a number-theoretic lemma, which is an extension of a standard result in analytic number theory according to which $$\lim_{m \rightarrow +\infty} \mathbb{P}_{P,Q \sim \mathrm{Unif}\{1,2,\ldots,m\}, P \independent  Q}\left[ \mathrm{gcd}\left(P,Q\right)=1\right]= \frac{6}{\pi^2},$$ where  $P \independent  Q$ refers to $P,Q$ being independent random variables. 
\begin{lemma}\label{gcdNT}
Suppose $q_1,q_2,q \in \mathbb{Z}_{>0}$ with $q \rightarrow + \infty$ and $\max\{q_1,q_2\}=o(q^2)$. Then  $$ |\{(a,b) \in \mathbb{Z}^2 \cap ( [q_1,q_1+q]\times [q_2,q_2+q] ) : \mathrm{gcd}(a,b)=1\}|=q^2\left(\frac{6}{\pi^2}+o_q(1)\right).$$In other words, if we choose independently one uniform integer in $[q_1,q_1+q]$ and another uniform integer in $[q_2,q_2+q]$ the probability that these integers are relatively prime approaches $\frac{6}{\pi^2}$, as $q \rightarrow +\infty$.
\end{lemma}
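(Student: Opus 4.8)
The plan is to adapt the classical Möbius-inversion count of coprime pairs in $[1,m]^2$ to an arbitrary axis-aligned integer box, and then to use the hypothesis $\max\{q_1,q_2\}=o(q^2)$ to show that all error terms are negligible compared with $q^2$. Writing $f_i(d)$ for the number of multiples of $d$ in the integer interval $[q_i,q_i+q]$, inclusion--exclusion over the common divisor gives
\[
N := \bigl|\{(a,b)\in\mathbb{Z}^2\cap([q_1,q_1+q]\times[q_2,q_2+q]):\mathrm{gcd}(a,b)=1\}\bigr| = \sum_{d\ge 1}\mu(d)\,f_1(d)\,f_2(d),
\]
where the sum is finite because $f_i(d)=0$ once $d>q_i+q$; in particular only $d\le D:=\max\{q_1,q_2\}+q$ contribute, and $D=o(q^2)$ since $\max\{q_1,q_2\}=o(q^2)$ and $q=o(q^2)$.

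Next I would use the elementary identity $f_i(d)=\lfloor(q_i+q)/d\rfloor-\lfloor(q_i-1)/d\rfloor = q/d + O(1)$, valid for every $d\ge 1$ with an absolute implied constant. Substituting and expanding the product yields
\[
N = \sum_{d=1}^{D}\mu(d)\left(\frac{q^2}{d^2}+O\!\left(\frac{q}{d}\right)+O(1)\right) = q^2\sum_{d=1}^{D}\frac{\mu(d)}{d^2} + O\!\left(q\sum_{d=1}^{D}\frac1d\right) + O(D).
\]
Then I would estimate the three pieces. Since $\sum_{d\ge1}\mu(d)/d^2 = 1/\zeta(2) = 6/\pi^2$ and $\sum_{d>D}1/d^2 = O(1/D)$, the main term equals $q^2\cdot\tfrac{6}{\pi^2} + O(q^2/D)$; as $D\ge q$ this error is $O(q)=o(q^2)$. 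The harmonic error is $O(q\log D)=O(q\log q)=o(q^2)$, using $D=o(q^2)$ so that $\log D=O(\log q)$. The last error is $O(D)=o(q^2)$ directly by hypothesis. Collecting the bounds gives $N = q^2\bigl(\tfrac{6}{\pi^2} + o_q(1)\bigr)$, and the probabilistic reformulation follows upon dividing by $(q+1)^2 = q^2(1+o(1))$.

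The single place where the assumption $\max\{q_1,q_2\}=o(q^2)$ is actually needed --- and hence the only genuine obstacle --- is in controlling the $O(D)$ term: translating the box away from the origin forces the Möbius sum to run over a range of divisors that grows with $q_1$ and $q_2$, and one needs this range to stay $o(q^2)$ so that the $O(1)$ per-divisor rounding errors do not accumulate to something comparable with $q^2$. Apart from that, the computation is essentially identical to the standard one for $[1,m]^2$, so I do not anticipate any real difficulty; the lemma is a routine strengthening of the classical result quoted immediately before it.
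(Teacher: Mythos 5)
Your proof is correct and follows essentially the same route as the paper's: Möbius inversion over the common divisor, the estimate that each integer interval of length $q$ contains $q/d+O(1)$ multiples of $d$, the identity $\sum_d \mu(d)/d^2=6/\pi^2$, and the hypothesis $\max\{q_1,q_2\}=o(q^2)$ used precisely to kill the accumulated $O(1)$ rounding errors over the divisor range. Your write-up is in fact slightly more careful than the paper's (explicitly bounding the tail $\sum_{d>D}\mu(d)/d^2=O(1/D)$ and the normalization by $(q+1)^2$), but there is no substantive difference in approach.
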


\begin{proof}
We call an integer $n \in \mathbb{Z}_{>0}$ square-free if it is not divisible by the square of a positive integer number other than 1. The \textbf{Mobius function} $\mu: \mathbb{Z}_{>0} \rightarrow \{-1,0,1\}$ is defined to be \[ \mu(n)=\begin{cases} 
      \text{    } 1, & $n$ \text{ is square-free with an even number of prime factors}  \\
      -1, & $n$ \text{ is square-free with an odd number of prime factors} \\
     \text{  } 0, & \text{ otherwise} 
   \end{cases}
\]

From now on we ease the notation by always referring for this proof to positive integer variables. A standard property for the Mobius function (see Theorem 263 in \cite{Hardy}) states that for all $n \in \mathbb{Z}_{>0}$, 
\[\sum_{1 \leq d \leq n, d | n} \mu(d)=\begin{cases} 
      1, & n=1  \\
       0, & \text{ otherwise} 
   \end{cases}
\]Therefore using the above identity and switching the order of summation we obtain
\begin{align*}
&|(a,b) \in  [q_1,q_1+q]\times [q_2,q_2+q], \mathrm{gcd}(a,b)=1|\\
=&\sum_{(a,b) \in  [q_1,q_1+q]\times [q_2,q_2+q]} \left( \sum_{1 \leq d \leq\mathrm{gcd}(a,b), d|\mathrm{gcd}(a,b)} \mu(d)\right)  \\
= &\sum_{1 \leq d \leq \max\{q_1,q_2\}+q} \left(\sum_{(a,b) \in  [q_1,q_1+q]\times [q_2,q_2+q], d| \mathrm{gcd}(a,b)}\mu(d) \right).
\end{align*}
Now introducing the change of variables $a=kd,b=ld$ for some $k,l\in \mathbb{Z}_{>0}$ and observing that the number of integer numbers in an interval of length $x>0$ are $x+O(1)$, we obtain
\begin{align*}
&\sum_{1 \leq d \leq \max\{q_1,q_2\}+q} \left(\sum_{ \frac{q_1}{d} \leq k \leq \frac{q_1+q}{d}, \frac{q_2}{d} \leq l \leq \frac{q_2+q}{d}}\mu(d) \right)\\
=&\sum_{1 \leq d \leq \max\{q_1,q_2\}+q} \left[\left(\frac{q}{d}+O(1)\right)^2\mu(d)\right]\\
=&\sum_{1 \leq d \leq \max\{q_1,q_2\}+q}\left[ \left(\frac{q}{d}\right)^2\mu(d)+O\left(\frac{q}{d}\right)\mu(d)+O(1)\mu(d)\right]
\end{align*}Now using $|\mu(d)| \leq 1$ for all $d \in \mathbb{Z}_{>0}$, for $n \in \mathbb{Z}_{>0}$, $$\sum_{d=1}^{n} \frac{1}{d}=O(\log n)$$and that by Theorem 287 in \cite{Hardy} for $n \in \mathbb{Z}_{>0}$, $$\sum_{d=1}^{n} \frac{\mu(d)}{d^2}=\frac{1}{\zeta(2)}+o_n(1)=\frac{6}{\pi^2}+o_n(1)$$we conclude that the last quantity equals
\begin{align*}
q^2\left(\frac{6}{\pi^2}+\frac{1}{q}O(\log (\max\{q_1,q_2\}+q))+\frac{\max\{q_1,q_2\}+q}{q^2}+o_q(1)\right).
\end{align*}Recalling the assumption $q_1,q_2=o(q^2)$ the proof is complete.
\end{proof}
\begin{claim}\label{gcd}
The greatest common divisor of the coordinates of $\beta:=\beta^*+Z$ equals to 1, w.p. $1-\exp \left(-\Theta\left(p \right) \right)$ with respect to the randomness of $Z$.
\end{claim}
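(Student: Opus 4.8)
The plan is to exploit that $Z$ has i.i.d.\ coordinates and that $\gcd(\beta_1,\dots,\beta_p)=1$ as soon as \emph{some} pair $\beta_i,\beta_j$ is coprime (since $\gcd(\beta_1,\dots,\beta_p)\mid\gcd(\beta_i,\beta_j)$). First I would partition $[p]$ into $\lfloor p/2\rfloor$ disjoint pairs $\{1,2\},\{3,4\},\dots$ and show that for each pair $k$ the two entries $\beta_{2k-1},\beta_{2k}$ are coprime with probability bounded below by a constant strictly larger than $1/2$, uniformly in $k$ and for all large $p$, via Lemma \ref{gcdNT}. Since distinct pairs involve disjoint blocks of the i.i.d.\ vector $Z$, the events ``pair $k$ is not coprime'' are mutually independent, so the probability that \emph{every} pair fails to be coprime --- an event containing $\{\gcd(\beta)>1\}$ --- is at most $2^{-\lfloor p/2\rfloor}=\exp(-\Theta(p))$, which is exactly the asserted bound.

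To apply Lemma \ref{gcdNT} I would first record that, since $\beta^*_i\in[-R,R]\subseteq[-\hat R,\hat R]$ and $Z_i$ is uniform on $\{\hat R+1,\dots,2\hat R+\log p\}$, each coordinate $\beta_i=\beta^*_i+Z_i$ is uniform on an interval $I_i$ of $L:=|\{\hat R+1,\dots,2\hat R+\log p\}|$ consecutive \emph{positive} integers (so the $\gcd$'s are well defined), where $L\ge\log p-O(1)\to\infty$ and the left endpoint of $I_i$ lies in $[1,\,3\hat R+\log p]$. I would then invoke Lemma \ref{gcdNT} on the independent pair $(\beta_{2k-1},\beta_{2k})$ with $q=L-1$ and with $q_1,q_2$ the left endpoints of $I_{2k-1},I_{2k}$; its hypothesis $\max\{q_1,q_2\}=o(q^2)$ holds because $\max\{q_1,q_2\}\le 3\hat R+\log p=O(L)$ while $q^2=(L-1)^2$ --- and, crucially, this is true regardless of the relative growth of $\hat R$ and $p$. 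The lemma then gives
$$\mathbb{P}\big[\gcd(\beta_{2k-1},\beta_{2k})=1\big]=\frac{q^2}{L^2}\Big(\frac{6}{\pi^2}+o(1)\Big)=\frac{6}{\pi^2}+o(1)>0.6$$
for $p$ large.

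Finally I would assemble the bound: for $p$ large each pair is non-coprime with probability at most $0.4<1/2$, so by independence across the $\lfloor p/2\rfloor$ disjoint pairs,
$$\mathbb{P}\big[\gcd(\beta_1,\dots,\beta_p)>1\big]\le\mathbb{P}\big[\text{every pair is non-coprime}\big]\le(1/2)^{\lfloor p/2\rfloor}.$$
The step I expect to require the most care is checking that the $o(1)$ in Lemma \ref{gcdNT} is \emph{uniform} over the whole family of pairs that arise, i.e.\ controlled purely in terms of $L$. This is read off from the explicit error analysis inside the proof of that lemma: the errors there are $O\!\big(q^{-1}\log(\max\{q_1,q_2\}+q)\big)$, $O\!\big((\max\{q_1,q_2\}+q)/q^2\big)$, and $\big|\sum_{d\le D}\mu(d)/d^2-6/\pi^2\big|=O(1/D)$ with $D=\max\{q_1,q_2\}+q\ge q$; since $q=L-1$ and $D=O(L)$ in every instance, each is $O(\log L/L)\to0$ at a rate independent of $k$, so the per-pair threshold is reached simultaneously for all pairs once $p$ is large. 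The remaining verifications --- positivity of the $\beta_i$, disjointness and hence independence of the coordinate blocks, and the elementary divisibility fact --- are routine bookkeeping.
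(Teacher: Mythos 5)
Your proposal is correct and follows essentially the same route as the paper's proof: pair up the coordinates into $\lfloor p/2\rfloor$ disjoint blocks, apply Lemma \ref{gcdNT} to each pair (checking $q_1,q_2=o(q^2)$ for the shifted intervals), and use independence across blocks to get the $(1-6/\pi^2+o(1))^{\lfloor p/2\rfloor}=\exp(-\Theta(p))$ bound. Your added care about the uniformity of the $o(1)$ term over the pairs is a reasonable tightening of the same argument, not a different approach.
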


\begin{proof}
Each coordinate of $\beta$ is a uniform and independent choice of a positive integer from an interval of length $2\hat{R}+\log p$ with starting point in $[\hat{R}-R+1,\hat{R}+R+1]$, depending on the value of $\beta^*_i \in [-R,R]$. Note though that Lemma \ref{gcdNT} applies for arbitrary $q_1,q_2 \in [\hat{R}-R+1,\hat{R}+R+1]$ and $q=2 \hat{R}+\log p$ since $q_1,q_2=o(q^2)$ and $q \rightarrow +\infty$. from this we conclude that the probability any two specific coordinates of $\beta$ have greatest common divisor 1 approaches $\frac{6}{\pi^2}$, as $p\rightarrow +\infty$. But the probability the greatest common divisor of all the coordinates is not one implies that the greatest common divisor of the $2i-1$ and $2i$ coordinate is not one, for every $i=1,2,\ldots\floor{\frac{p}{2}}$. Hence using the independence among the values of the coordinates, we conclude that the greatest common divisor of the coordinates of $\beta$ is not one with probability at most $$\left(1-\frac{6}{\pi^2}+o_p(1)\right)^{\floor{\frac{p}{2}}} =\exp \left(-\Theta\left(p \right) \right).$$ 
\end{proof}

Given a vector $z \in \mathbb{R}^{2n+p}$, define $z_{n+1:p}:=(z_{n+1},\ldots,z_{n+p})^t$. 
\begin{claim}\label{multiple} The outcome of Step 5 of the algorithm, $\hat{z}$, satisfies
\begin{itemize}
\item $\|\hat{z}\|_2<m$ 
\item  $\hat{z}_{n+1:n+p}= q \beta$, for some $q \in \mathbb{Z}^*$, w.p. $1-O\left(\frac{1}{np}\right)$.
\end{itemize}
\end{claim}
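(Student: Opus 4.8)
The plan is to analyze the lattice $\mathcal{L}$ generated by the columns of $A_m$, following the strategy of \cite{FriezeSubset} but adapted to the $2n+p$ dimensional construction. A generic lattice vector has the form $A_m v$ for $v = (v_1, v_2, v_3)^t \in \mathbb{Z}^p \times \mathbb{Z}^n \times \mathbb{Z}^n$, whose blocks are: top block $m(Xv_1 - \mathrm{Diag}(Y_2)v_2 + v_3) \in \mathbb{Z}^n$, middle block $v_1 \in \mathbb{Z}^p$, bottom block $v_3 \in \mathbb{Z}^n$. First I would exhibit a short vector in $\mathcal{L}$: taking $v_1 = \beta$, $v_2 = \mathbf{1}$ (the all-ones vector), and $v_3 = \mathrm{Diag}(Y_2)\mathbf{1} - X\beta = Y_2 - X\beta = -W_1$ (recall $Y_2 = X\beta + W_1$ from the proof of Lemma \ref{lemmaY}), the top block vanishes entirely, so this lattice vector equals $(0, \beta, -W_1)^t$, which has $\mathcal{L}_2$ norm at most $\sqrt{p}\,\|\beta\|_\infty + \sqrt{n}\,\|W_1\|_\infty \le \sqrt{p}(3\hat R + \log p) + \sqrt{n}(\|W\|_\infty + 1)$, which is comfortably below $m/2^{(2n+p)/2}$ by the definition of $m$ in Step 4 together with the size of $N$ in \eqref{eq:limit}. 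Since LLL guarantees $\|\hat z\|_2 \le 2^{(2n+p)/2} \cdot \|z\|_2$ for every nonzero $z \in \mathcal{L}$, applying this with $z = (0,\beta,-W_1)^t$ gives $\|\hat z\|_2 < m$, the first bullet.

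Next, the first bullet forces structure on $\hat z$. Write $\hat z = A_m v$ for the corresponding $v = (v_1, v_2, v_3)$. Since the top block of $\hat z$ is $m$ times an integer vector and $\|\hat z\|_2 < m$, that integer vector must be zero: $X v_1 - \mathrm{Diag}(Y_2) v_2 + v_3 = 0$, i.e. $v_3 = \mathrm{Diag}(Y_2)v_2 - X v_1$, and moreover $\hat z = (0, v_1, v_3)^t$ with $\|v_1\|_2 < m$ and $\|v_3\|_2 < m$. Componentwise, for each $i \in [n]$ we get $(v_3)_i = (Y_2)_i (v_2)_i - (Xv_1)_i$, hence $(Y_2)_i (v_2)_i = (Xv_1)_i + (v_3)_i$, so $|(Y_2)_i|\,|(v_2)_i| \le \|Xv_1\|_\infty + \|v_3\|_\infty$. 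Since $\|v_1\|_2 < m$ and the entries of $X$ are bounded by roughly $2^N$ in the relevant regime — actually one should bound $\|Xv_1\|_\infty$ via $\|X\|$ and $\|v_1\|_2 < m$ — combined with the lower bound $|(Y_2)_i| \ge \tfrac32 \delta 2^N$ from Lemma \ref{lemmaY}, I would derive $|(v_2)_i|$ is so constrained that in fact $(v_2)_i$ equals a single common value; more precisely, plugging $Y_2 = X\beta + W_1$ into $(Y_2)_i(v_2)_i = (Xv_1)_i + (v_3)_i$ yields $(X(v_1 - (v_2)_i\beta))_i = (v_3)_i - (v_2)_i (W_1)_i$, and the right side is small (bounded by something like $m \cdot \|W_1\|_\infty$) while the left side, if $v_1 - (v_2)_i \beta \ne 0$, is an inner product of a bounded integer vector with a row of $X$ and hence is a ``generic'' integer, not concentrated near any value. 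A union bound over $i$ and over the (polynomially many, given the norm constraints) possible integer vectors $v_1 - (v_2)_i\beta$, using the anti-concentration $\mathbb{P}(\langle X^{(i)}, u\rangle = k) \le c/2^N$ from assumption (1) exactly as in the proof of Lemma \ref{lemmaY}, shows that w.p. $1 - O(1/(np))$ we must have $v_1 = (v_2)_i \beta$ for each $i$; in particular all the $(v_2)_i$ with $(v_2)_i \ne 0$ are equal, say to $q$, and $v_1 = q\beta$. Finally $q \ne 0$: if $q = 0$ then $v_1 = 0$, and then $v_3 = \mathrm{Diag}(Y_2)v_2$ with $\|v_3\|_2 < m$ forces (using $|(Y_2)_i| \ge \tfrac32\delta 2^N$, which exceeds $m$? — no, one checks the quantitative bounds) $v_2 = 0$ hence $\hat z = 0$, contradicting that LLL returns a nonzero vector. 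Therefore $\hat z_{n+1:n+p} = v_1 = q\beta$ with $q \in \mathbb{Z}^*$.

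The main obstacle I anticipate is the quantitative bookkeeping in the second bullet: one has to propagate the bound $\|\hat z\|_2 < m$ through $\|v_1\|_2, \|v_3\|_2 < m$, control $\|Xv_1\|_\infty$ in terms of $m$ and the (huge) magnitude $2^N$ of the $X$-entries, and verify that the choice of $m$ in Step 4 and the lower bound on $N$ in \eqref{eq:limit} are large enough — but not so large as to spoil the anti-concentration union bound, which ranges over $O((\text{poly})^{\,p})$ candidate vectors $u = v_1 - (v_2)_i\beta$ each contributing $c/2^N$. This is precisely the role of the $\tfrac{1}{2n}(2n+p)^2$ and the $10\log(\hat R\sqrt p + (\|W\|_\infty+1)\sqrt n)$ terms in \eqref{eq:limit}: the first makes $2^N$ beat $2^{(2n+p)/2}$ times the candidate count, the second makes $m$ beat $2^{(2n+p)/2}$ times the norm of the planted short vector. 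Making all these inequalities line up simultaneously — and in particular confirming that when $v_1 - (v_2)_i\beta \ne 0$ the vector $u$ genuinely has bounded (not merely finite) entries so that the anti-concentration applies with the stated count — is the delicate part; everything else is the linear-algebraic skeleton of the Frieze argument carried out in the enlarged dimension.
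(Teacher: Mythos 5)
Your proposal follows the same route as the paper: exhibit the planted lattice vector $(0,\beta,W_1)^t$ (note your $v_3$ should be $+W_1$, not $-W_1$, since $Y_2=X\beta+W_1$ gives $v_3=Y_2-X\beta=W_1$; this is immaterial for the norm bound), invoke the LLL approximation guarantee to get $\|\hat z\|_2<m$, use the factor $m$ in the top block to force $Xv_1-\mathrm{Diag}(Y_2)v_2+v_3=0$, rule out the $q=0$ case via the lower bound on $|(Y_2)_i|$ from Lemma \ref{lemmaY}, and eliminate all remaining triplets with $v_1$ not a multiple of $\beta$ by anti-concentration plus a union bound. The skeleton, the need to bound $\|v_2\|_\infty$ before counting, and the role of each term of (\ref{eq:limit}) are all correctly identified.

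The one substantive gap is in the probabilistic accounting of the final union bound. You propose ``a union bound over $i$ and over the \ldots possible integer vectors $v_1-(v_2)_i\beta$, each contributing $c/2^N$, exactly as in the proof of Lemma \ref{lemmaY}.'' That single-row accounting does not close. The number of candidate triplets $(v_1,v_2,v_3)$ compatible with $\|v_1\|_2^2+\|v_3\|_2^2\le m_0^2$ and $\|v_2\|_\infty=O(m_0n^2p^3/\delta)$ is of order $m_0^{n+p}\cdot(m_0n^2p^3/\delta)^n=2^{\Theta((2n+p)^2)}$ --- not polynomial, and not $\mathrm{poly}^p$ as you write at one point --- so paying only $c/2^N$ per candidate would force $N\gtrsim(2n+p)^2/2$, a factor of $n$ stronger than hypothesis (\ref{eq:limit}). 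The paper instead fixes a candidate triplet and \emph{multiplies} the anti-concentration bounds across the $n$ independent rows of $X$: each row $i$ yields the event $\langle X^{(i)},v_1-(v_2)_i\beta\rangle=(v_3)_i-(v_2)_iW_i$ with $v_1-(v_2)_i\beta\ne0$, so the triplet survives with probability at most $c^n/2^{nN}$, and only then does one union-bound over triplets. The extra factor of $n$ in the exponent of $2^{nN}$ is exactly what makes the hypothesis $N\ge(2n+p)^2/(2n)+\cdots$ suffice, and it is the entire mechanism by which additional samples reduce the required bit precision; as written, your accounting only establishes the claim for $n=1$.
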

\begin{proof}
Call $\mathcal{L}_m$the lattice generated by the columns of the $(2n+p)\times (2n+p)$ integer-valued matrix $A_m$ defined in the algorithm; that is $\mathcal{L}_m:=\{A_mz | z \in \mathbb{Z}^{2n+p}\}$. Notice that as $Y_2$ is nonzero at every coordinate, the lattice $\mathcal{L}_m$ is full-dimensional and the columns of $A_m$ define a basis for $\mathcal{L}_m$. Finally, an important vector in $\mathcal{L}_m$ for our proof is $z_0 \in \mathcal{L}_m$ which is defined for $1_n \in \mathbb{Z}^n$ the all-ones vector as \begin{equation} z_0:=A_m \left[ {\begin{array}{c}
    \beta \\
     1_{n} \\
    W_1
  \end{array} } \right]=\left[ {\begin{array}{c}
    0_{n \times 1} \\
     \beta \\
    W_1
  \end{array} } \right] \in \mathcal{L}_m. \end{equation}  
  
  Consider the following optimization problem on $\mathcal{L}_m$, known as the shortest vector problem, $$\begin{array}{clc}(\mathcal{S}_2)  & \min  &\|z\|_2 \\ &\text{s.t.}&  z \in \mathcal{L}_m, 
\end{array}$$ If $z^*$ is the optimal solution of $(\mathcal{S}_2)$ we obtain $$\|z^*\|_2 \leq \|z_0\|_2=\sqrt{\|\beta\|_2^2+\|W_1\|_2^2} \leq \|\beta\|_{\infty}\sqrt{p}+\|W_1\|_{\infty}\sqrt{n}.$$ and therefore given our assumptions on $\beta,W$ $$\|z^*\|_{2} \leq \left(3\hat{R}+\log p\right) \sqrt{p}+\left(\|W\|_{\infty}+1\right) \sqrt{n}.$$Using that $\hat{R} \geq 1$ and a crude bound this implies $$\|z^*\|_{2} \leq 4p \left(\hat{R} \sqrt{p}+\left(\|W\|_{\infty}+1\right)\sqrt{n} \right).$$ The LLL guarantee and the above observation imply that
   \begin{equation}\label{eq:LLLguar}
\|\hat{z}\|_2 \leq 2^{\frac{2n+p}{2}}\|z^*\|_2 \leq 2^{\frac{2n+p}{2}+2}p\left(\hat{R} \sqrt{p}+\left(\|W\|_{\infty}+1\right) \sqrt{n}\right):=m_0.
\end{equation}
Now recall that $ \hat{W}_{\infty} \geq \max\{\|W\|_{\infty},1\}$. Since $m \geq 2^{n+\frac{p}{2}+3}p\left(\hat{R} \sqrt{p}+\hat{W}_{\infty} \sqrt{n}\right)$, we obtain $m>m_0$ and hence $\|\hat{z}\|_2 <m$. This establishes the first part of the Claim.

For the second part, given (\ref{eq:LLLguar}) and that $\hat{z}$ is non-zero it suffices to establish that under the conditions of our Theorem there is no non-zero vector in $\mathcal{L}_m \setminus \{  z \in \mathcal{L}_m|  z_{n+1:n+p}= q \beta, q \in \mathbb{Z}^* \}$ with $L_2$ norm less than $m_0$, w.p. $1-O\left(\frac{1}{np}\right)$. By construction of the lattice for any $z \in \mathcal{L}_m$ there exists an $x \in \mathbb{Z}^{2n+p}$ such that $z=A_mx$. We decompose $x=(x_1,x_2,x_3)^t$ where $x_1 \in \mathbb{Z}^p, x_2,x_3 \in \mathbb{Z}^n$. It must be true \begin{equation*} z=\left[ {\begin{array}{c}
    m \left(Xx_1-\mathrm{Diag}_{n \times n}(Y)x_2+x_3 \right)\\
     x_1 \\
    x_3
  \end{array} } \right]. \end{equation*}Note that $x_1= z_{n+1:n+p}$. We use this decomposition of every $z \in \mathcal{L}_m$ to establish our result. 

We first establish that for any lattice vector $z \in \mathcal{L}_m$ the condition $\|z\|_2 \leq m_0$ implies necessarily \begin{equation}\label{neces}Xx_1-\mathrm{Diag}_{n \times n}(Y)x_2+x_3 = 0.\end{equation}and in particular $z=(0,x_1,x_3)$. If not, as it is an integer-valued vector, $\|Xx_1-\mathrm{Diag}_{n \times n}(Y)x_2+x_3||_2 \geq 1$ and therefore 
  \begin{equation*} 
m \leq m \|Xx_1-\mathrm{Diag}_{n \times n}(Y)x_2+x_3\|_2 \leq  \|z\|_2 \leq m_0 ,
  \end{equation*}a contradiction as $m>m_0$. Hence, necessarily equation (\ref{neces}) and $z=(0,x_1,x_3)$ hold. 

Now we claim that it suffices to show that there is no non-zero vector in $\mathcal{L}_m \setminus \{  z \in \mathcal{L}_m|  z_{n+1:n+p}= q \beta, q \in \mathbb{Z} \}$ with $L_2$ norm less than $m_0$,  w.p. $1-O\left(\frac{1}{np}\right)$. Note that in this claim the coefficient $q$ is allowed to take the zero value as well. The reason it suffices to prove this weaker statement is that any non-zero $z \in \mathcal{L}_m$ with $\|z\|_2 \leq m_0$ necessarily satisies that $z_{n+1:n+p} \not = 0$  w.p. $1-O\left(\frac{1}{np}\right)$ and therefore the case $q=0$ is not possible w.p. $1-O\left(\frac{1}{np}\right)$. To see this, we use the decomposition and recall that $x_1=z_{n+1:n+p}$. Therefore it suffices to establish that there is no triplet $x=(0,x_2,x_3)^t \in \mathbb{Z}^{2n+p}$ with $x_2,x_3 \in \mathbb{Z}^n$ for which the vector $z=A_mx \in \mathcal{L}_m$ is non-zero and $\|z\|_2 \leq m_0$,  w.p. $1-O\left(\frac{1}{np}\right)$. To prove this, we consider such a triplet $x=(0,x_2,x_3)$ and will upper bound the probability of its existence. From equation (\ref{neces}) it necessarily holds $\mathrm{Diag}_{n \times n}(Y)x_2=x_3$, or equivalently \begin{equation}\label{newequation} \text{ for all } i \in [n], Y_i (x_2)_i=(x_3)_i. \end{equation} From Lemma \ref{lemmaY} and (\ref{newequation}) we obtain that \begin{equation}\label{newequation1} \text{ for all } i \in [n], \frac{3}{2}\delta 2^N |(x_2)_i| \leq |(x_3)_i|\end{equation}w.p. $1-O\left(\frac{1}{np}\right)$. Since $z$ is assumed to be non-zero and $z=A_mx=(0,0,x_3)$ there exists $i \in [n]$ with $(x_3)_i \not = 0$. Using (\ref{newequation}) we obtain $(x_2)_i \not =0 $ as well. Therefore for this value of $i$ it must be simultaneously true that $|(x_2)_i| \geq 1$ and $|(x_3)_i| \leq m_0$. Plugging these inequalities to (\ref{newequation1}) for this value of $i$, we conclude that it necessarily holds that $$\frac{3}{2}\delta 2^N \leq m_0$$ Using the definition of $\delta$, $\delta= \frac{1}{n^2p^2}$, we conclude that it must hold $\frac{1}{n^2p^2} 2^N \leq m_0$, or $$N \leq 2\log (np)+\log m_0.$$ Plugging in the value of $m_0$ we conclude that for sufficiently large $p$, $$ N \leq 2\log (np)+ \frac{2n+p}{2}+\log p+\log \left(\hat{R} \sqrt{p}+\left(\|W\|_{\infty}+1\right) \sqrt{n} \right).$$ This can be checked to contradict directly our hypothesis (\ref{eq:limit}) and the proof of the claim is complete.

Therefore using the decomposition of every $z \in \mathcal{L}_m$, equation (\ref{neces}) and the claim in the last paragraph it suffices to establish that w.p. $1-O\left(\frac{1}{np}\right)$ there is no triplet $(x_1,x_2,x_3)$ with
\begin{itemize}
\item[(a)]  $x_1 \in \mathbb{Z}^p, x_2,x_3 \in \mathbb{Z}^n$;

\item[(b)] $ \|x_1\|_2^2+\|x_3\|_2^2 \leq m_0$;

\item[(c)]  $Xx_1-\mathrm{Diag}_{n \times n}(Y)x_2-x_3 = 0$;

\item[(d)]  $\forall q \in \mathbb{Z}: x_1 \not = q \beta$.
\end{itemize}

We first claim that any such triplet $(x_1,x_2,x_3)$ satifies w.p. $1-O\left(\frac{1}{np}\right)$  \begin{equation*}\|x_2\|_{\infty} = O(\frac{ m_0n^2p^3}{\delta}).\end{equation*} To see this let $i=1,2,\ldots,n$ and denote by $X^{(i)}$ the $i$-th row of $X$. We have because of (c),
\begin{align*}
0=(Xx_1-\mathrm{Diag}_{n \times n}(Y)x_2-x_3)_i=\inner{X^{(i)}}{x_1}-Y_i(x_2)_i-(x_3)_i,
\end{align*}  
and therefore by triangle inequality 
\begin{equation}\label{eq:firstbound}
|Y_i (x_2)_i|=|\inner{X^{(i)}}{x_1}-(x_3)_i| \leq |\inner{X^{(i)}}{x_1}|+|(x_3)_i|.
\end{equation}  But observe that for all $i \in [n], \|X^{(i)}\|_{\infty} \leq \|X\|_{\infty} \leq (np)^22^N$ w.p. $1-O\left(\frac{1}{np}\right)$. Indeed using a union bound, Markov's inequality and our assumption on the distribution $\mathcal{D}$ of the entries of $X$, $$\mathbb{P}\left(\|X\|_{\infty} > (np)^22^N\right) \leq np \mathbb{P}\left(|X_{11}| >(np)^22^N\right) \leq \frac{1}{2^Nnp}\mathbb{E}[ |X_{11}|]  \leq \frac{C}{np}=O \left(\frac{1}{np}\right),$$ which establishes the result. Using this, Lemma \ref{lemmaY} and (\ref{eq:firstbound}) we conclude that for all $i \in [n]$ w.p. $1-O\left(\frac{1}{np}\right)$ $$|(x_2)_i|\frac{3}{2}\delta 2^N \leq (2^Np(np)^2+1)m_0$$ which in particular implies $$|(x_2)_i| \leq O(\frac{ m_0n^2p^3}{\delta}),$$ w.p. $1-O\left(\frac{1}{np}\right)$.

Now we claim that for any such triplet $(x_1,x_2,x_3)$ it also holds
\begin{equation}\label{firststep}
\mathbb{P}\left(Xx_1-\mathrm{Diag}_{n \times n}(Y)x_2-x_3 =0\right) \leq \frac{c^n}{2^{nN}}.\end{equation} 
To see this note that for any $i \in [n]$ if $X^{(i)}$ is the $i$-th row of $X$ because $Y=X\beta+W$ it holds $Y_i=\inner{X^{(i)}}{\beta}+W_i$. In particular, $Xx_1-\mathrm{Diag}_{n \times n}(Y)x_2-x_3 =0$ implies for all $i \in [n]$, 
\begin{align*}
&\inner{X^{(i)}}{x_1}-Y_i(x_2)_i=(x_3)_i\\
 \text{ or }&\inner{X^{(i)}}{x_1}-\left(\inner{X^{(i)}}{\beta}+W_i\right)(x_2)_i=(x_3)_i\\
 \text{ or } &\inner{X^{(i)}}{x_1-(x_2)_i \beta}=(x_3)_i-(x_2)_iW_i
\end{align*}  Hence using independence between rows of $X$, \begin{align}\label{constraint}
\mathbb{P}\left(Xx_1-\mathrm{Diag}_{n \times n}(Y)x_2-x_3 =0 \right) =\prod_{i=1}^{n}\mathbb{P}\left(\inner{X^{(i)}}{x_1-(x_2)_i \beta}=(x_3)_i-(x_2)_iW_i\right)
\end{align} But because of (d) for all $i$, $x_1-(x_2)_i \beta \not = 0$. In particular, $\inner{X^{(i)}}{x_1-(x_2)_i \beta}=(x_3)_i-(x_2)_iW_i$ constraints at least one of the entries of $X^{(i)}$ to get a specific value with respect to the rest of the elements of the row which has probability at most $\frac{c}{2^N}$ by the independence assumption on the entries of $X$. This observation with (\ref{constraint}) implies (\ref{firststep}).

 Now, we establish that indeed there are no such triplets, w.p. $1-O\left(\frac{1}{np}\right)$. Recall the standard fact that for any $r>0$ there are at most $O(r^n)$ vectors in $\mathbb{Z}^n$ with $L_{\infty}$-norm at most $r$. Using this, (\ref{firststep}) and a union bound over all the integer vectors $(x_1,x_2,x_3)$ with $\|x_1\|_2^2+\|x_3\|_2^2 \leq m_0$, $\|x_2\|_{\infty} = O(\frac{m_0n^2p^3}{\delta})$ we conclude that the probability that there exist a triplet $(x_1,x_2,x_3)$ satisfying (a), (b), (c), (d) is at most of the order $$(\frac{m_0n^2p^3}{\delta})^n m_0^{n+p} \left[ \frac{c^n}{2^{nN}}\right].$$ Plugging in the value of $m_0$ we conclude that the probability is at most of the order
\begin{align*}
\frac{2^{\frac{1}{2}(2n+p)^2+n \log (cn^2p^3)+n \log (\frac{1}{\delta})+(2+\log p)(2n+p)}\left[ \hat{R} \sqrt{p}+\left(\|W\|_{\infty}+1\right) \sqrt{n}\right]^{2n+p}}{2^{nN}}.\\
\end{align*} 
Now recalling that $\delta=\frac{1}{n^2p^2}$ we obtain $\log (\frac{1}{\delta}) = 2\log (np)$ and therefore the last bound becomes at most of the order 
\begin{equation*}
\frac{2^{\frac{1}{2}(2n+p)^2+5n \log (cnp)+(2+\log p)(2n+p)}\left[ \hat{R} \sqrt{p}+\left(\|W\|_{\infty}+1\right) \sqrt{n}\right]^{2n+p}}{2^{nN}}.
\end{equation*}  We claim that the last quantity is $O\left(\frac{1}{np}\right)$ because of our assumption (\ref{eq:limit}). Indeed the logarithm of the above quantity equals $$\frac{1}{2}(2n+p)\left(2n+p+4+2 \log p+2\log\left( \hat{R} \sqrt{p}+\left(\|W\|_{\infty}+1\right) \sqrt{n}\right) \right) +5n \log (cnp)-nN. $$ Using that $\hat{R} \geq 1$ this is upper bounded by $$ \frac{1}{2}(2n+p)\left(2n+p+10\log\left( R \sqrt{p}+\left(\|W\|_{\infty}+1\right)\sqrt{n}\right) \right)+5n \log (cnp)-nN $$ which by our assumption (\ref{eq:limit}) is indeed less than $-n \log (np) <-\log (np),$ implying the desired bound. This completes the proof of claim \ref{multiple}.
\end{proof}

Now we prove Theorem \ref{extention}. 
First with respect to time complexity, it suffices to analyze Step 5 and Step 6. For step 5 we have from \cite{lenstra1982factoring} that it runs in time polynomial in $n,p,\log \|A_m\|_{\infty}$ which indeed is polynomial in $n,p, N$ and $\log \hat{R}, \log \hat{W}$. For step 6, recall that the Euclid algorithm to compute the greatest common divisor of $p$ numbers with norm bounded by $\|\hat{z}\|_{\infty}$ takes time which is polynomial in $p,\log \|\hat{z}\|_{\infty}$. But from Claim \ref{multiple} we have that $\|\hat{z}\|_{\infty}<m$ and therefore the time complexity is polynomial in $p, \log m$ and therefore again polynomial in $n,p, N$ and $\log \hat{R}, \log \hat{W}$.

Finally we prove that the ELO algorithm outputs exactly $\beta^*$ w.p. $1-O\left(\frac{1}{np}\right)$. We obtain from Claim \ref{multiple} that $\hat{z}_{n+1:n+p}=q\beta$ for $\beta=\beta^*+Z$ and some $q \in \mathbb{Z}^*$ w.p. $1-O\left(\frac{1}{np}\right)$. We claim that the $g$ computed in Step 6 is this non-zero integer $q$ w.h.p. To see it notice that from Claim \ref{gcd} $\mathrm{gcd}(\beta)=1$ w.p. $1-\exp(-\Theta(p))=1-O\left(\frac{1}{np}\right)$ and therefore the $g$ computed in Step 6 satisfies w.p. $1-O\left(\frac{1}{np}\right),$ $$g=\mathrm{gcd}(\hat{z}_{n+1:n+p})=\mathrm{gcd}(q \beta)=q \mathrm{gcd}(\beta)=q.$$
Hence we obtain w.p. $1-O\left(\frac{1}{np}\right)$. $$\hat{z}_{n+1:n+p}=g\beta=g \left(\beta^*+Z \right)$$ or w.p. $1-O\left(\frac{1}{np}\right)$ $$\beta^*=\frac{1}{g}\hat{z}_{n+1:n+p}-Z,$$ which implies based on Step 7 and the fact that $g=q \not =0$ that indeed the output of the algorithm is $\beta^*$ w.p. $1-O\left(\frac{1}{np}\right)$. The proof of Theorem \ref{extention} is complete.
\end{proof}

\section{Proofs of Theorems \ref{main} and \ref{mainiid}}\label{ProofMain}
\subsection*{Proof of Theorem \ref{main}}
\begin{proof}
We first analyze the algorithm with respect to time complexity. It suffices to analyze step 2 as step 1 runs clearly in polynomial time $N,n,p$. Step 2 runs the ELO algorithm. From Theorem \ref{extention} we obtain that the ELO algorithm terminates in polynomial time in $n,p,N, \log \left(\hat{Q}\hat{R}\right),\log \left(2\hat{Q} \left(2^N\hat{W}+\hat{R}p\right) \right)$. As the last quantity is indeed polynomial in $n,p,N,\log \hat{R},\log \hat{Q},\log \hat{W}$, we are done.

Now we prove that $\hat{\beta^*}=\beta^*$, w.p. $1-O\left(\frac{1}{np}\right)$. Notice that it suffices to show that the output of Step 3 of the LBR algorithm is exactly $\hat{Q}\beta^*$, as then step 4 gives $\hat{\beta^*}=\frac{Q\beta^*}{Q}=\beta^*$ w.p. $1-O\left(\frac{1}{np}\right)$.

We first establish that
\begin{equation}\label{eq:first}
2^{N}\hat{Q}Y_N=2^NX_{N} \hat{Q}\beta^*+W_0
\end{equation}  for some $W_0 \in \mathbb{Z}^n$ with $\|W_{0}\|_{\infty} + 1 \leq 2\hat{Q}\left(2^N\sigma+Rp\right)$. We have $Y=X\beta^*+W$, with $\|W\|_{\infty} \leq \sigma$. From the way $Y_N$ is defined,  $\|Y-Y_N \|_{\infty} \leq 2^{-N}$. Hence for $W'=W+Y_N-Y$ which satisfies $\|W'\|_{\infty} \leq 2^{-N}+\sigma$ we obtain  $$Y_N=X\beta^*+W'.$$ Similarly since  $\|X-X_N \|_{\infty} \leq 2^{-N}$ and $\|\beta^*\|_{\infty} \leq R$ we obtain $\|\left(X-X_N\right)\beta^* \|_{\infty} \leq 2^{-N}Rp$, and therefore for $W''=W'+\left(X-X_N\right)\beta^*$ which satisfies $\|W''\|_{\infty} \leq 2^{-N}+\sigma+2^{-N}rp$ we obtain, $$Y_N=X_N\beta^*+W''$$ or equivalently 
\begin{equation*}
2^NY_N=2^NX_N\beta^*+W''',
\end{equation*}
where $W''':=2^NW''$ which satisfies $\|W'''\|_{\infty} \leq 1+2^N\sigma+Rp$. Multiplying with $\hat{Q}$ we obtain 
\begin{equation*}
2^{N}\hat{Q}Y_N=2^NX_N \left(\hat{Q}\beta^*\right)+W_0,
\end{equation*}where $W_{0}:=\hat{Q}W'''$ which satisfies $\|W_{0}\|_{\infty} \leq \hat{Q}\left(1+2^N\sigma+Rp\right) \leq 2\hat{Q}\left(2^N\sigma+Rp\right)-1$. This establishes equation (\ref{eq:first}).

We now apply Theorem \ref{extention} for $Y$ our vector $\hat{Q}2^NY_N$, $X$ our vector $2^NX_N$, $\beta^*$ our vector $\hat{Q}\beta^*$, $W$ our vector $W_0$, $R$ our $\hat{Q}R$, $\hat{R}$ our $\hat{Q}\hat{R}$, $\hat{W}$ our quantity $2\hat{Q}\left(2^N\sigma+Rp\right)$ and finally $N$ our truncation level $N$.

  We fist check the assumption (1), (2), (3) of Theorem \ref{extention}. We start with assumption (1). From the definition of $X_N$ we have that $2^NX_{N} \in \mathbb{Z}^{n \times p}$  and that for all $i \in [n],j \in [p]$, $$|(2^NX_N)_{ij}| \leq 2^N|X_{ij}|.$$ Therefore for $C=\mathbb{E}[|X_{1,1}|]<\infty$ and arbitrary $i \in [n],j \in [p]$, $$\mathbb{E}[|(2^NX_N)_{ij}|] \leq 2^N\mathbb{E}[|X_{ij}|]=C2^N,$$as we wanted. Furthermore, if $f$ is the density function of the distribution $\mathcal{D}$ of the entries of $X$, recall $\|f\|_{\infty} \leq c$, by our hypothesis. Now observe for arbitrary $i \in [n],j \in [p]$, $$\mathbb{P}\left((2^NX_N)_{ij}=k\right)=\mathbb{P}\left(\frac{k}{2^N}\leq X_{ij} \leq \frac{k+1}{2^N}\right) =\int_{\frac{k}{2^N}}^{\frac{k+1}{2^N}} f(u)du \leq \|f\|_{\infty}\int_{\frac{k}{2^N}}^{\frac{k+1}{2^N}} du \leq \frac{c}{2^N}.$$ This completes the proof that $2^NX_N$ satisfies assumption (1) of Theorem \ref{extention}. For assumption (2), notice that $\hat{Q}\beta^*$ is integer valued, as $\hat{Q}$ is assumed to be a mutliple of $Q$ and $\beta^*$ satisfies $Q$-rationality. Furthermore clearly $$\|\hat{Q}\beta^*\|_{\infty} \leq \hat{Q}R.$$ For the noise level we have by (\ref{eq:first}) $W_0=2^{N}\hat{Q}Y_N-2^NX_{N} \hat{Q}\beta^*$ and therefore $W_0 \in \mathbb{Z}^n$ as all the quantities $ 2^{N}\hat{Q}Y_N$, $2^NX_{N}$ and  $\hat{Q}\beta^*$ are integer-valued. Finally, Assumption (3) follows exactly from equation (\ref{eq:first}). 
  
 Now we check the parameters assumptions of Theorem \ref{extention}. We clearly have $$\hat{Q}R \leq \hat{Q}\hat{R}$$ and $$\|W\|_{\infty} \leq 2\hat{Q}\left(2^N\sigma+Rp\right)=\hat{W}.$$  The last step consists of establishing the relation (\ref{eq:limit}) of Theorem \ref{main}. Plugging in our parameter choice it suffices to prove \begin{equation*}N>\frac{(2n+p)}{2}\left(2n+p+10\log \left( \hat{Q}\hat{R}\sqrt{p}+2\hat{Q}\left(2^{N}\sigma +Rp \right) \sqrt{n}\right) \right)+6n \log (\left(1+c\right)np).\end{equation*} Using that $\hat{Q}R\sqrt{p} \leq \hat{Q}\left(2^{N}\sigma +\hat{R}p \right) \sqrt{n}$ and $R \leq \hat{R}$ it suffices to show after elementary algebraic manipulations that \begin{equation*}N>\frac{(2n+p)}{2}\left(2n+p+10 \log 3+10\log \hat{Q}+  10\log \left( 2^{N}\sigma +\hat{R}p \right) +5 \log n\right) +6n \log (\left(1+c\right)np).\end{equation*}Using now that by elementary considerations $$\frac{(2n+p)}{2}\left(10 \log 3 +5 \log n\right) +4n \log (\left(1+c\right)np) < \frac{(2n+p)}{2}[20 \log (3\left(1+c\right) np)] \text{ for all } n \in \mathbb{Z}_{>0},$$ it suffices to show\begin{equation*}N>\frac{(2n+p)}{2}\left(2n+p+10\log \hat{Q}+  10\log \left( 2^{N}\sigma +\hat{R}p \right)+20  \log (3\left(1+c\right) np) \right), \end{equation*} which is exactly assumption (\ref{eq:limit1}). 

Hence, the proof that we can apply Theorem \ref{extention} is complete. Applying it we conclude that w.p. $1-O\left(\frac{1}{np}\right)$ the output of LBR algorithm at step 3 is $\hat{Q}\beta^*$, as we wanted. 
\end{proof}

\subsection*{Proof of Theorem \ref{mainiid}}
By using a standard union bound and Markov inequality we have $$\mathbb{P}\left(\|W\|_{\infty} \leq  \sqrt{np}\sigma \right) \geq 1-\sum_{i=1}^n \mathbb{P}\left( |W_i| > \sqrt{np}\sigma  \right) \geq 1-n\frac{\mathbb{E}\left[W_1^2\right]}{np\sigma^2} \geq 1- \frac{1}{p}.$$Therefore, conditional on the high probability event $\|W\|_{\infty} \leq  \sqrt{np}\sigma$, we can apply Theorem \ref{main} with $\sqrt{np}\sigma$ instead of $\sigma$ and conclude the result.
\section{Rest of the Proofs}\label{Rest}
\subsection*{Proof of Proposition \ref{cor2}}

\begin{proof}
If we show that we can apply Theorem \ref{mainiid}, the result follows. Since the model assumptions are identical we only need to check the parameter assumptions of Theorem \ref{mainiid}. First note that we assume $\hat{R}=R$, we clearly have for the noise $\sigma \leq W_{\infty}=1$ and finally $\hat{Q}=Q$.
Now for establishing \ref{eq:limit2}, we first notice that since $N \leq \log \left(\frac{1}{\sigma}\right)$ is equivalent to $2^N\sigma \leq 1$, we obtain $2^{N}\sigma\sqrt{np} +Rp \leq 2^{\log (np)+\log (Rp)}$. Therefore it suffices 
\begin{equation*}
N>\frac{(2n+p)^2}{2n}+22\frac{2n+p}{n}\log (3(1+c)np)+\frac{2n+p}{n} \log (RQ)
\end{equation*} Now since $p \geq \frac{300}{\epsilon} \log \left(\frac{300}{c \epsilon}\right)$ it holds \begin{equation}\label{eq:NEW}
22(2n+p)\log (3(1+c)np)<\frac{\epsilon}{2} \frac{(2n+p)^2}{2},\end{equation} for all $n \in \mathbb{Z}_{>0}$.Indeed, this can be equivalently written as $$22<\frac{\epsilon}{4} \frac{2n+p}{\log (3(1+c)np)}. $$But $\frac{2n+p}{\log (3(1+c)np)}$ increases with respect to $n \in \mathbb{Z}_{>0}$ and therefore it is minimized for $n=1$. In particular it suffices to have $$22<\frac{\epsilon}{4} \frac{2+p}{\log (3(1+c)p)}, $$which can be checked to be true for $p \geq \frac{300}{\epsilon} \log \left(\frac{300}{(1+c) \epsilon}\right)$. Therefore using (\ref{eq:NEW}) it suffices
 \begin{equation*}
N>(1+\frac{\epsilon}{2})\frac{(2n+p)^2}{2n}+\frac{2n+p}{n} \log (RQ).
\end{equation*}But observe 
\begin{align*} 
N  &\geq (1+\epsilon)\left[\frac{p^2}{2n}+2n+2p+(2+\frac{p}{n}) \log \left(RQ\right)\right]\\
&=(1+\epsilon)\left[\frac{(2n+p)^2}{2n}+(\frac{2n+p}{n}) \log \left(RQ\right)\right]\\
&> (1+\frac{\epsilon}{2})\frac{(2n+p)^2}{2}+(2n+p) \log (RQ).
\end{align*} The proof of Proposition \ref{cor2} is complete.
\end{proof}

\subsection*{Proof of Proposition \ref{InfTh}}
\begin{proof}
We first establish that $\|X\|_{\infty} \leq (np)^2$ whp as $p \rightarrow +\infty$. By a union bound and Markov inequality \begin{align*}
\mathbb{P}\left(\max_{i \in [n],j \in [p]} |X_{ij}| > (np)^2\right) \leq np \mathbb{P}\left(|X_{11}|>(np)^2\right) \leq \frac{1}{np}\mathbb{E}[|X_{11}|] =o(1).
\end{align*} Therefore with high probability $\|X\|_{\infty} \leq (np)^2$. Consider the set $T(R,Q)$ of all the vectors $\beta^* \in [-R,R]^p$ satisfying the $Q$-rationality assumption. The entries of these vectors are of the form $\frac{a}{Q}$ for some $a \in \mathbb{Z}$ with $|a| \leq RQ$. In particular $|T(R,Q)|=\left(2QR+1\right)^p$. Now because the entries of $X$ are continuously distributed, all $X\beta^*$ with $\beta^* \in T(R,Q)$ are distinct with probability 1. Furthermore by the above each one of them has $L_2$ norm satisfies $$\|X\beta^*\|_2^2 \leq np^2 \|X\|_{\infty}^2 \|\beta^*\|_{\infty}^2 \leq R^2n^5p^6<R^2(np)^6,$$w.h.p. as $p \rightarrow + \infty$.

Now we establish the proposition by contradiction. Suppose there exist a recovery mechanism that can recover w.h.p. any such vector $\beta^*$ after observing $Y=X\beta^*+W \in \mathbb{R}^n$, where $W$ has $n$ iid $N(0,\sigma^2)$ entries. In the language of information theory such a recovery guarantee implies that the Gaussian channel with power constraint $R^2(np)^6$ and noise variance $\sigma^2$ needs to have capacity at least $$\frac{\log |T(R,Q)|}{n}=\frac{p \log \left(2QR+1\right)}{n}.$$ On the other hand, the capacity of this Gaussian channel with power $\mathcal{R}$ and noise variance $\Sigma^2$ is known to be equal to $\frac{1}{2}\log \left(1+\frac{\mathcal{R}}{\Sigma^2}\right)$ (see for example Theorem 10.1.1 in \cite{Cover}).  In particular our Gaussian communication channel has capacity $$\frac{1}{2}\log \left(1+\frac{R^2(np)^6}{\sigma^2}\right).$$From this we conclude 
\begin{align*}
\frac{p \log \left(2QR+1\right)}{n} \leq \frac{1}{2}\log \left(1+\frac{R^2(np)^6}{\sigma^2}\right),
\end{align*} which implies
\begin{align*}
\sigma^2 \leq R^2(np)^6 \frac{1}{2^{\frac{2p \log \left(2QR+1\right)}{n}}-1},
\end{align*} or $$\sigma \leq R(np)^3 \left(2^{\frac{2p \log \left(2QR+1\right)}{n}}-1\right)^{-\frac{1}{2}},$$ which completes the proof of the Proposition.
\end{proof}

\subsection*{Proof of Proposition \ref{optimal}}
\begin{proof}
Based on Proposition \ref{cor2} the amount of noise that can be tolerated is $2^{-(1+\epsilon)\left[\frac{p^2}{2n}+2n+2p+(2+\frac{p}{n}) \log \left(RQ\right)\right]}$, for an arbitrary $\epsilon>0$. Since $n=o(p)$ and $RQ=2^{\omega(p)}$ this simplifies asymptotically to $2^{-(1+\epsilon)\left[\frac{p}{n}\log \left(RQ\right)\right]}$, for an arbitrary $\epsilon>0$. 
Since $\sigma<\sigma_0^{1+\epsilon}$, we conclude that LBR algorithms is succesfully working in that regime.

For the first part it suffices to establish that under our assumptions for $p$ sufficiently large, $$ \sigma_0^{1-\epsilon}>R(np)^3 \left(2^{\frac{2p \log \left(2QR+1\right)}{n}}-1\right)^{-\frac{1}{2}}.$$ Since $n=o(\frac{p}{\log p})$ implies $n=o(p)$ we obtain that for $p$ sufficiently large, \begin{equation*}2^{\frac{2p \log \left(2QR+1\right)}{n}}-1 > 2^{2(1-\frac{1}{2}\epsilon)\frac{p \log \left(2QR+1\right)}{n}} \end{equation*} which equivalently gives \begin{equation*}\left(2^{\frac{2p \log \left(2QR+1\right)}{n}}-1\right)^{-\frac{1}{2}}<2^{-(1-\frac{1}{2}\epsilon)\frac{p \log \left(2QR+1\right)}{n}} \end{equation*} or $$R(np)^3\left(2^{\frac{2p \log \left(2QR+1\right)}{n}}-1\right)^{-\frac{1}{2}}<R(np)^32^{-(1-\frac{1}{2}\epsilon)\frac{p \log \left(2QR+1\right)}{n}}.$$ Therefore it suffices to show $$R(np)^32^{-(1-\frac{1}{2}\epsilon)\frac{p \log \left(2QR+1\right)}{n}} \leq \sigma_0^{1-\epsilon}=2^{-(1-\epsilon)\frac{p \log \left(QR\right)}{n}}$$or equivalently by taking logarithms and performing elementary algebraic manipulations, \begin{equation*} n\log R+3n \log (np) \leq \left(1-\frac{\epsilon}{2}\right)p \log (2+\frac{1}{RQ} )+\frac{\epsilon}{2} p\log RQ.\end{equation*} The condition $n=o(\frac{p}{\log p})$ implies for sufficiently large $p$, $n \log (np) \leq \frac{\epsilon}{4} p $ and $ n\log R \leq  \frac{\epsilon}{2} p \log {QR} $. Using both of these inequalities we conclude that for sufficiently large $p$,
\begin{align*}
n\log R+3n \log (np) & \leq \frac{\epsilon}{2} p \log {QR} \\
& \leq \left(1-\frac{\epsilon}{2}\right)p \log (2+\frac{1}{RQ} )+\frac{\epsilon}{2} p\log RQ.
\end{align*}This completes the proof.

\end{proof}

\subsubsection*{Acknowledgments}

The authors would like to gratefully aknowledge the work of Patricio Foncea and Andrew Zheng on performing the synthetic experiments for the ELO and LBR algorithms, as part of a project for a graduate-level class at MIT, during Spring 2018.

\bibliographystyle{apalike}
\bibliography{bibliography}

\end{document}